\newlist{lemlist}{enumerate}{1} \setlist[lemlist]{label={\rm(\arabic{lemlisti})}, ref=\thelemma.(\arabic{lemlisti}),noitemsep} \Crefname{lemlisti}{Lemma}{Lemmas}
\theoremstyle{plain}
\newtheorem{thm}{Theorem}
\Crefname{thm}{Theorem}{Theorems}
\numberwithin{equation}{section}
\newtheorem{lemma}{Lemma}
\numberwithin{lemma}{section}
\Crefname{lemma}{Lemma}{Lemmas}
\newtheorem{cor}[lemma]{Corollary}
\Crefname{cor}{Corollary}{Corollaries}
\newtheorem{prop}[lemma]{Proposition}
\Crefname{prop}{Proposition}{Propositions}
\newtheorem*{thm*}{Theorem}
\newtheorem*{lemma*}{Lemma}
\theoremstyle{remark}
\newtheorem{rem}[lemma]{Remark}
\Crefname{rem}{Remark}{Remarks}
\theoremstyle{definition}
\newtheorem{df}[lemma]{Definition} \Crefname{df}{Definition}{Definitions}
 \Crefname{example}{Example}{Examples}
\newcommand{\catname}[1]{{\normalfont\textbf{#1}}}
\DeclareMathOperator{\HH}{H}
\DeclareMathOperator{\Ker}{Ker}
\DeclareMathOperator{\Hom}{Hom}
\DeclareMathOperator{\sd}{sd}
\newcommand{\ZZ}{\mathbb{Z}}
\newcommand{\pullbackcorner}[1][dr]{\save*!/#1-1.2pc/#1:(-1,1)@^{|-}\restore}
\title[Parametrized symmetric groups]{Parametrized symmetric groups and \\ the second homology of a group}
\keywords {Extensions of type $\mathfrak{H}_n(G)$, amalgamated products, van Kampen theorem,
 {\em Mathematical Subject Classification (2010):} 20F05, 55Q05, 20B30, 20E06, 20J06, 20F55}
\author {Sergey Sinchuk}
\address{Chebyshev laboratory, St. Petersburg State University, St. Petersburg, Russia}
\email {sinchukss at gmail.com}
\date {\today}
\begin{document}

\begin{abstract}
We introduce the notion of a symmetric group parametrized by elements of a group.
We show that this group is an extension of a subgroup of the wreath product $G \wr S_n$ by $\HH_2(G, \ZZ)$.
We also discuss the motivation behind this construction.
\end{abstract}

\maketitle

\section{Introduction}
The aim of this note is to describe a connection between symmetric groups and nonabelian tensors products 
 which apparently has not been studied in detail previously.
 
Let $G$ be a (possibly noncommutative) group. 
Denote by $S_n(G)$ the group generated by elements $s_i(a)$, where $i$ is an integer $1\leq i\leq n-1$ and $a$ is an element of $G$, subject to the following relations:
\begin{align}
s_i(a)^2                 = &\, 1,                                     & a \in G,                  \label{Cox1} \\
s_i(a) s_{i+1}(b) s_i(c) = &\, s_{i+1}(a^{-1}cb) s_i(a) s_{i+1}(b),   & a, b, c \in G,            \label{Cox2}  \\
s_i(a) s_j(b)            = &\, s_j(b) s_i(a),                         & |i-j|\geq 2,\, a, b\in G. \label{Cox3} 
\end{align}
Notice that in the special case $G=1$ the above presentation boils down to the usual Coxeter presentation of the symmetric group $S_n$,
 i.\,e. $S_n(1) = S_n$.
 
The group defined above bears some similarity with the group $G_n(R)$ defined by C.~Kassel and C.~Reutenauer in~\cite{KR98}
 (the generators of this group are parametrized by elements of a ring $R$ rather than a group, cf. also with the {\it parametrized braid group} defined by J.~-L.~Loday and M.~Stein in~\cite{LS05}).
By analogy, we call the group $S_n(G)$ a {\it parametrized symmetric group}.
Our first main result is the following theorem, which provides its concrete description.

\begin{thm} \label{thm:summary} For $n \geq 3$ there is a map $\mu_n\colon S_n(G) \to G \wr S_n$ which is a crossed module.
The cokernel and the kernel of $\mu_n$ are isomorphic to $\HH_1(G, \ZZ)$ and $\HH_2(G, \ZZ)$ respectively.
\end{thm}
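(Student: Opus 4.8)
The plan is to construct $\mu_n$ explicitly on generators, read off the cokernel from an auxiliary homomorphism, install a crossed-module action, and only then compute the kernel by relating it to the nonabelian exterior square. First I would set $\mu_n(s_i(a))=(e_i(a);s_i)$, where $s_i=(i,i+1)\in S_n$ and $e_i(a)\in G^n$ carries $a$ in coordinate $i$, $a^{-1}$ in coordinate $i+1$, and $1$ elsewhere. Relations \eqref{Cox1} and \eqref{Cox3} are immediate in $G\wr S_n$, and \eqref{Cox2} is a direct wreath-product computation; in fact $e_i(a)$ is forced to have this form, since it is the only choice making $\mu_n(s_i(a))$ an involution. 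Thus $\mu_n$ is well defined.

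For the cokernel I would use the map $p\colon G\wr S_n\to \HH_1(G,\ZZ)$ sending $(g_1,\dots,g_n;\sigma)$ to the class of the product $g_1\cdots g_n$. One checks $p$ is a homomorphism and that $p\circ\mu_n=0$, so $\operatorname{im}\mu_n\subseteq\Ker p$. Conversely $\operatorname{im}\mu_n$ contains the copy of $S_n$ spanned by the $\mu_n(s_i(1))=(1;s_i)$ together with the ``root'' elements $(e_i(a);1)=\mu_n(s_i(a)s_i(1))$; conjugating the latter by $S_n$ and taking commutators produces all of $\Ker(G^n\to \HH_1(G,\ZZ))$, whence $\operatorname{im}\mu_n=\Ker p$ and $\operatorname{coker}\mu_n\cong\HH_1(G,\ZZ)$.

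To make $\mu_n$ a crossed module I would let $G\wr S_n$ act on $S_n(G)$ as follows: the normal subgroup $G^n$ twists parameters, ${}^{(g_1,\dots,g_n)}s_i(a)=s_i(g_i\,a\,g_{i+1}^{-1})$, while $S_n$ acts by conjugation through the section $s_i\mapsto s_i(1)$. The twist preserves \eqref{Cox1}--\eqref{Cox3} — crucially \eqref{Cox2} survives because the twisted right-hand parameter is exactly $g_{i+1}(a^{-1}cb)g_{i+2}^{-1}$ — and $G^n$-equivariance of $\mu_n$ is the identity $(g;1)\mu_n(s_i(a))(g;1)^{-1}=\mu_n({}^{g}s_i(a))$. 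After verifying that the two actions assemble compatibly into a $G\wr S_n$-action, the Peiffer identity reduces to the claim that the action of $\mu_n(s_i(a))$ agrees with conjugation by $s_i(a)$ inside $S_n(G)$, which I would check by factoring $s_i(a)$ as a parameter-twist of $s_i(1)$.

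Finally, since $\mu_n$ carries the section $S_n$ isomorphically onto $S_n\le G\wr S_n$, the kernel lies in $P_n(G):=\Ker(S_n(G)\to S_n)$ and equals $\Ker(P_n(G)\to G^n)$; being central (a consequence of the crossed-module axioms), it makes $1\to\Ker\mu_n\to P_n(G)\to K\to1$ a central extension of $K=\Ker(G^n\to \HH_1(G,\ZZ))$. The hard part will be identifying this central kernel with $\HH_2(G,\ZZ)$: because $a\mapsto(e_i(a);1)$ fails to be a homomorphism when $G$ is nonabelian, the computation cannot be linear and must be organized through the nonabelian exterior square. I would produce mutually inverse maps between $\Ker\mu_n$ and $\Ker(G\wedge G\to[G,G])\cong\HH_2(G,\ZZ)$ (Miller), the natural generators of $\Ker\mu_n$ being ``triangle'' elements such as $w_{ij}(a)w_{jk}(a)w_{ki}(a)$ formed from conjugated root elements; alternatively one may present $S_n(G)$ as a pushout and invoke a Brown--Loday type van Kampen theorem to recognize the exterior square directly. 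A stabilization argument showing $\Ker\mu_n$ is independent of $n$ for $n\ge3$ should reduce everything to $n=3$, where \eqref{Cox2} first constrains the triangle elements and the comparison with Hopf's formula $\HH_2(G,\ZZ)=(R\cap[F,F])/[F,R]$ can be carried out.
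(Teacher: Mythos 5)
Your first half is viable and, at the level of formulas, coincides with the paper's: your $\mu_n(s_i(a))=(e_i(a);s_i)$ is exactly the paper's $\mu$ (under $s_i(a)\mapsto (i,i+1)_{a[i+1]}$), the cokernel argument amounts to $\mathrm{Im}(\mu_n)=D_n(G)\rtimes S_n$ with $D_n(G)$ generated by the vectors $d_{ij}(g)$, and your reduction of the Peiffer identity to the statement that conjugation by $s_i(1)$ is the action of $(1,s_i)$ is formally correct \emph{once a $G\wr S_n$-action exists}. But the clause you pass over in one line, ``after verifying that the two actions assemble compatibly into a $G\wr S_n$-action,'' is where the entire difficulty of this half lives, and it is the only place where $n\geq 3$ enters; your reduction does not remove the difficulty, it relocates it. Checking compatibility of the twist with conjugation at equal indices requires the identity $s_j(x)s_j(y)s_j(x)=s_j(xy^{-1}x)$ in $S_n(G)$, which does not follow from \eqref{Cox1}--\eqref{Cox3} by any local manipulation: it needs a third index, via the trick $(ij)=(kj)(ik)(kj)$ of \cref{lem:transp-deff} and \cref{thm:cms}, and hence needs the non-simple reflections $(ij)_a$ and their transformation rules --- most of the paper's Section~2, which the paper organizes by redefining $S_n(G)$ as an amalgam indexed by $G^n$ (where the action is nearly tautological, \cref{prop:Cox-Amalgam} doing the translation). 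The check cannot be soft: for $n=2$ both partial actions exist separately but do not assemble, and indeed $s_1(1)s_1(a)s_1(1)s_1(a^{-1})$ lies in $\Ker\mu_2$ without being central, so $\mu_2$ is not a crossed module. (Also, your claim that $e_i(a)$ is forced by the involution condition is false when $G$ has $2$-torsion: any $(g;s_i)$ with $g_ig_{i+1}=1$ and $g_j^2=1$ elsewhere is an involution. This is harmless.)

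The genuine gap is the kernel computation, which is the actual content of the theorem. You correctly reduce to the central extension $1\to\Ker\mu_n\to P_n(G)\to D_n(G)\to 1$ and correctly predict that the answer should pass through the exterior square and Miller's theorem; but the bridge from $P_n(G)$ to $G\wedge G$ is precisely what has to be built, and your proposal offers only names of strategies. What is needed is a presentation of the \emph{subgroup} $P_n(G)=\Ker(S_n(G)\to S_n)$, which no presentation of $S_n(G)$ gives for free: the paper devotes Section~3 to this, producing generators $h_{ij}(a)=(ij)_a(ij)_1$ and relations via a rewriting process in the sense of \cite[\S~2.3]{MKS76} (\cref{prop:HSpres}), matching them against Rehmann's relations \eqref{RH1}--\eqref{RH5} (\cref{prop:simpler}), and only then quoting \cite[Proposition~5]{De76} to identify $\Ker(P_n(G)\to D_n(G))$ with $\HH_2(G,\ZZ)$. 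Moreover, the one concrete detail you do give is wrong: the ``triangle elements'' $w_{ij}(a)w_{jk}(a)w_{ki}(a)$ cannot generate $\Ker\mu_n$, because Rehmann's relations hold in $P_n(G)$ (\cref{prop:simpler}) and a short computation with \eqref{RH1}--\eqref{RH3} turns $h_{ij}(a)h_{jk}(a)h_{ki}(a)$ into the symbol $c_{ik}(a,a)^{-1}=\left(h_{ik}(a)h_{ik}(a)h_{ik}(a^2)^{-1}\right)^{-1}$, and these are exactly the symbols killed by relation \eqref{H4} (see the proof of \cref{cor:main}); so every one of your proposed generators is trivial in $P_n(G)$. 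The kernel consists instead of products $\prod_k c_{i_kj_k}(u_k,v_k)$ of symbols satisfying $\prod_k[u_k,v_k]=1$, as Hopf's formula suggests. Your alternatives (a Brown--Loday type van Kampen argument; stabilization to $n=3$ plus Hopf's formula) are likewise unexecuted --- the stabilization claim is itself unproved, and the $n=3$ Hopf-formula comparison would amount to reproving the Miller--Dennis theorem you intend to cite. As it stands, the proposal establishes (modulo the hidden work above) the crossed-module and cokernel statements, but not the isomorphism $\Ker\mu_n\cong\HH_2(G,\ZZ)$.
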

In fact, we compute $S_n(G)$ in terms of so-called extensions of type $\mathfrak{H_n}(G)$, see~\cref{cor:main}.
These groups were introduced by U.~Rehmann in~\cite{Reh78} in the context of noncommutative Matsumoto theorem.
They are directly related with nonabelian tensor products.
For example, the nonabelian antisymmetric square $G \mathbin{\widetilde{\otimes}} G$ is a subgroup of the group $H_n(G)$, the ''universal`` extension of type $\mathfrak{H}_n(G)$
 (we refer the reader to Section~\ref{sec:Hnextensions} for more details).

 
In course of the proof of~\cref{thm:summary} we find a presentation of the universal extension of type $\mathfrak{H}_n(G)$ which is
 simpler than its original presentation given in~\cite{Reh78} (4 instead of 5 relations, which are also shorter), see~\cref{prop:simpler}.

Parametrized symmetric groups occur as fundamental groups of certain spaces, which, in turn,
 were discovered out of an attempt to answer some purely topological question. 
We describe this question below in more detail.

For a pointed simplicial set $(X, x_0)$ denote by $\ZZ[X]$ the associated free simplicial abelian group.
Define the ''alternating`` map of simplicial sets $h_n\colon X^{2n} \to \ZZ[X]$ by the following formula:
 \[h_n(x_1, x_2, \ldots x_{2n}) = \sum\limits_{i=1}^{2n}(-1)^ix_i.\]
Define the ''stabilization`` map $X^{2n} \hookrightarrow X^{2(n+1)}$ by $x \mapsto (x, x_0, x_0)$ and denote the colimit of $X^n$ by $X^\infty$.

Motivated by the study of homotopy invariants of maps, S.~Podkorytov showed for a connected simplicial set $X$ that the limit map 
$h_\infty = \mathrm{colim}_n(h_n) \colon X^\infty \to \ZZ[X]$ is a quasifibration.
This means, in particular, that the natural map $h^{-1}_\infty(b) \to F_{h_\infty}(b)$ between the fiber and the homotopy fiber of $h_\infty$ over any point $b \in \ZZ[X]_0$ is a weak equivalence.
This assertion essentially follows from~\cite[Lemma~9.1]{Po17}, although some technical work is required to deduce this. 

One might ask whether the fibers of $h_n$ satisfy some weaker property.
For example, one might expect that the map $h_n^{-1}(0) \to F_{h_n}(0)$ is $c(n)$-connected for some $c(n)$ which depends only on $n$ and tends to infinity as $n\to \infty$.
  
The following result which is a consequence of Theorem 1 asserts that this map at least induces an isomorphism of fundamental groups in one particular special case.
\begin{thm} \label{thm:main} If $X=BG$ is the classifying space of a group $G$ then for $n\geq 3$
 the natural map $h_n^{-1}(0) \to F_{h_n}(0)$ induces an isomorphism of fundamental groups
 $\pi_1(h^{-1}_n(0))\cong \pi_1(F_{h_n}(0))$. \end{thm}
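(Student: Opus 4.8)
The strategy is to compute both fundamental groups through their maps to $\pi_1(X^{2n})$ and to compare them by the five lemma. For the homotopy fibre I would start from the Dold--Thom theorem, which identifies $\pi_k(\ZZ[X]) \cong \HH_k(G,\ZZ)$ for $k \geq 1$. Since $X = BG$ is a $K(G,1)$, the space $X^{2n}$ is a $K(G^{2n},1)$, so $\pi_1(X^{2n}) = G^{2n}$ and $\pi_2(X^{2n}) = 0$. The long exact sequence of the homotopy fibration $F_{h_n}(0) \to X^{2n} \xrightarrow{h_n} \ZZ[X]$ then gives
\[ 0 \to \HH_2(G,\ZZ) \to \pi_1(F_{h_n}(0)) \xrightarrow{q} G^{2n} \xrightarrow{(h_n)_*} \HH_1(G,\ZZ), \]
where $(h_n)_*(g_1,\dots,g_{2n}) = \sum_{i=1}^{2n}(-1)^i[g_i]$. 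This last map is surjective, so $F_{h_n}(0)$ is connected and $q$ presents $\pi_1(F_{h_n}(0))$ as an extension of $K := \Ker (h_n)_*$ by $\HH_2(G,\ZZ)$; as $\ZZ[X]$ is a simplicial abelian group its $\pi_1$ acts trivially on $\pi_2$, so this extension is central.

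For the strict fibre I would analyse $h_n^{-1}(0)$ as a simplicial subset of $X^{2n}$: it has a single vertex, its nondegenerate $1$-simplices are the tuples $(g_1,\dots,g_{2n}) \in G^{2n}$ whose even-indexed entries form the same multiset as the odd-indexed ones, and its $2$-simplices impose, by van Kampen, the relation $(g_ih_i) = (g_i)(h_i)$ whenever all three tuples satisfy this matching condition. Let $p\colon \pi_1(h_n^{-1}(0)) \to \pi_1(X^{2n}) = G^{2n}$ be induced by the inclusion. Two facts are needed. First, $\mathrm{im}\,p = K$: matching tuples manifestly lie in $K$, and the reverse inclusion — realizing the difficult generators of $K$, such as a single commutator $[g,h]$ placed in one coordinate, as products of matching tuples — is exactly where the hypothesis $n \geq 3$ enters, mirroring the three-index braid relation~\eqref{Cox2}. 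Second, $\Ker p \cong \HH_2(G,\ZZ)$, realized compatibly with the Dold--Thom connecting map above.

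The second fact is where \cref{thm:summary} does the work. Modulo the $2$-simplex relations the matching $1$-simplices are generated by the parametrized adjacent transpositions together with \emph{diagonal} classes, and the transposition part reproduces precisely the relations~\eqref{Cox1}--\eqref{Cox3}; the diagonal classes inject into $G^{2n}$ and so contribute nothing to $\Ker p$. Thus $\Ker p$ is computed by the same extension-of-type-$\mathfrak{H}_n(G)$ mechanism that underlies $S_n(G)$, and the identification $\Ker \mu_n \cong \HH_2(G,\ZZ)$ of \cref{thm:summary} yields $\Ker p \cong \HH_2(G,\ZZ)$. Finally, the natural map $\phi\colon h_n^{-1}(0) \to F_{h_n}(0)$ commutes with the two maps to $X^{2n}$, i.e.\ $q \circ \phi_* = p$; hence $\phi_*$ carries $1 \to \Ker p \to \pi_1(h_n^{-1}(0)) \to K \to 1$ to $1 \to \HH_2(G,\ZZ) \to \pi_1(F_{h_n}(0)) \to K \to 1$, inducing the identity on $K$ and the isomorphism just described on kernels, so the five lemma shows $\phi_*$ is an isomorphism.

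I expect the main obstacle to be the kernel identification $\Ker p \cong \HH_2(G,\ZZ)$. The subtlety is that the comparison map $\mu_n$ of \cref{thm:summary} lands in $G \wr S_n$, whereas $p$ lands in $G^{2n}$, and the two maps have genuinely different images — indeed $\pi_1(h_n^{-1}(0))$ is strictly larger than $S_n(G)$ in general (already for $G=\ZZ/2$, $n=3$) — so one cannot simply quote the theorem. What must be shown is that the kernels are nonetheless governed by the same two-dimensional data, namely the parametrized Coxeter relations, so that both equal $\HH_2(G,\ZZ)$. Making the passage from the van Kampen presentation of $h_n^{-1}(0)$ to the relations~\eqref{Cox1}--\eqref{Cox3} precise, and combining it with the surjectivity $\mathrm{im}\,p = K$ where $n \geq 3$ is essential, is the technical heart of the argument.
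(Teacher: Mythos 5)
Your overall skeleton --- comparing the strict-fibre sequence with the homotopy-fibre sequence over $G^{2n}$ and closing with the five lemma --- is the same as the paper's, but there is a genuine gap at exactly the step you wave through with ``realized compatibly with the Dold--Thom connecting map above.'' The five lemma does not ask that the two kernels be abstractly isomorphic; it asks that the specific map $\psi\colon \Ker p \to \HH_2(G,\ZZ)$ induced by $\phi_*$ be an isomorphism, and an endomorphism of $\HH_2(G,\ZZ)$ (multiplication by $2$, or by $0$) can perfectly well fail to be one. Establishing this compatibility is where the paper spends the second half of Section~\ref{sec:main}: because the identification $\Ker p\cong \HH_2(G,\ZZ)$ coming from \cref{cor:main} and \cref{cor:ker-iso} is natural in $G$, the composite $\HH_2(G,\ZZ)\cong\Ker p\xrightarrow{\ \psi\ }\HH_2(G,\ZZ)$ is a natural endotransformation of $\HH_2(-,\ZZ)$, hence multiplication by a fixed integer (\cref{lm:endotr}); that integer is then pinned down to $\pm 1$ by an explicit computation for abelian $G$ --- tracing the generator $c_{ij}(x,y)$ through the path-space model of $F_{h_n}$ (\cref{lem:concrete-formula}), filling horns via Moore's theorem, comparing with the Dold--Kan representative of $x\wedge y$, and exhibiting an explicit $3$-chain whose boundary is the difference of the two cycles (the computation around~\eqref{eq:formula-t}). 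Nothing in your proposal plays the role of either ingredient, and without them the diagram chase does not close.

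The identification $\Ker p\cong\HH_2(G,\ZZ)$ itself is also on shakier ground than you suggest. Your plan is to present $\pi_1(h_n^{-1}(0))$ directly by van Kampen and argue that the ``diagonal classes contribute nothing to $\Ker p$,'' so that the kernel is governed by the relations~\eqref{Cox1}--\eqref{Cox3} alone. That assertion is unsupported: products mixing diagonal and transposition-type classes could a priori produce kernel elements invisible in $S_n(G)$, and, as you note yourself, $\pi_1(h_n^{-1}(0))$ is strictly larger than $S_n(G)$, so one cannot simply split off a ``diagonal part.'' The paper never presents $\pi_1(h_n^{-1}(0))$ at all: it introduces the auxiliary space $S=\bigcup_{h}BN_h\subseteq B(G\wr S_n)$, whose fundamental group is $S_n(G)$ on the nose by van Kampen (this is literally the amalgam definition of $S_n(G)$ from Section~\ref{sec:QnG-def}), and then transfers kernels by proving $\Ker\bigl(\pi_1(S)\to G\wr S_n\bigr)\cong\Ker\bigl(\pi_1(h_n^{-1}(0))\to G^{2n}\bigr)$ via a homotopy equivalence built from Quillen's Theorem~A (\cref{lm:quillen-a}, \cref{cor:ker-iso}). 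Some substitute for this mechanism is needed to turn your heuristic about diagonal classes into a proof.
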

Apart from~\cref{thm:summary} the proof of \cref{thm:main} involves some explicit computations with simplicial fundamental groups 
 using Quillen Theorem A and van Kampen theorem.

\subsection{Acknowledgements}
I wish to thank S.~S. Podkorytov for posing the problem and numerous helpful discussions.
I am also grateful to S.~O.~Ivanov, V.~Isaev and N.~A.~Vavilov for their valuable comments and interest in this work.
During the final stage of this work the author was supported by the Russian Science Foundation grant 19-71-30002.

\section{Definition of $S_n(G)$ and its basic properties} \label{sec:QnG-def}
The aim of this section is to prove the first part of~\cref{thm:summary}, namely the fact that $S_n(G)$ is a crossed module over $G \wr S_n$.
It will be useful for us to start with another definition of the group $S_n(G)$ given in terms of amalgams of symmetric groups.
In the end of the section we show that this alternative definition coincides with the one given in the introduction.

Let $G$ be a group. Recall that the {\it wreath product} is, by definition,
the semidirect product $G^n \rtimes S_n$, in which $S_n$ acts on $G^n$ on the right by permuting its factors.

Consider the family $\{{S_n}^{(g)}\}_{g\in G^n}$ of isomorphic copies of $S_n$ indexed by elements of $G^n$ and let $F$ be the free product of groups from this family.
For $s\in S_n$ and $g\in G^n$ we denote by 
$s_{g}$ the image of $s$ in $F$ under the canonical map $S_n^{(g)} \to F$.
Define the group $S_n(G)$ as the quotient of $F$ modulo the following single family of relations:
\begin{equation} \label{eq:main_rel} s_{g} = s_{h}, \text{ if $s^g = s^h$ holds in $G \wr S_n$.} \end{equation}
We still denote by $s_g$ the image of the element $s_g\in F$ under the canonical map $F\to S_n(G)$.

\begin{rem}
Expanding the definition of the semidirect product we get that
\begin{equation} \nonumber s^g = (1_{G^n}, s)^{(g, 1)} = (g^{-1}, s) (g, 1) = (g^{-1} g^{(s^{-1})}, s). \end{equation}
therefore the equality $s^g = s^h$ holds in $G \wr S_n$ iff
$g^{-1} g^{(s^{-1})} = h^{-1} h^{(s^{-1})}$, or what is the same, iff $s$ fixes $hg^{-1}$.

The last statement immediately implies that the map $\mu \colon S_n(G) \to G \wr S_n$ given by $\mu(s_g) = s^g = (g^{-1}g^{(s^{-1})}, s)$ is well-defined.
\end{rem}

Observe from the definition of $S_n(G)$ that there is a split exact sequence.
\begin{equation} \label{eq:ex-seq} \xymatrix{1 \ar[r] & \mathrm{Ker}(\pi) \ar[r] &  S_n(G) \ar@<0.5ex>[r]^{\pi} & S_n \ar[r] \ar@{-->}@<0.5ex>[l]^{\iota(1)} & 1,} \end{equation}
Here the map $\pi = \pi_{S_n} \circ \mu$ removes subscript $g$ from each $s_g$ and the section $\iota(1)$ maps $s$ to $s_1$, where $1$ is the identity element of $G^n$.
Clearly, $S_n(G)$ decomposes as $HS_n(G) \rtimes S_n$, where $HS_n(G) = \mathrm{Ker}(\pi)$.

\subsection{Crossed module structure on $S_n(G)$}
Recall from~\cite[\S~2.2]{BHS11} that a {\it crossed module} is a morphism of groups $\mu\colon M\to N$ together with a right action of $N$ on $M$ 
compatible with the conjugation action of $N$ on itself, i.\,e.
\begin{equation} \label{eq:precrossed} \tag{CM1} \mu(m^n) = \mu(m)^n \text{ for all $n \in N$, $m \in M$}, \end{equation}
which also satisfies the following identity called {\it Peiffer identity}:
\begin{equation} \label{eq:Peiffer} \tag{CM2} m^{m'} = m^{\mu(m')} \text{ for all $m, m' \in M$}.\end{equation}
In our situation, we let $G \wr S_n$ act on $S_n(G)$ by 
\begin{equation} \label{eq:action} (s_g)^{(h, t)} = {s^t}_{(gh)^t}, \text{ for $s, t \in S_n$ and $g, h \in G^n$.} \end{equation}
The goal of this subsection is to prove the following.
\begin{prop} \label{thm:cms} For $n \geq 3$ the map $\mu \colon S_n(G) \to G \wr S_n $ is a crossed module. \end{prop}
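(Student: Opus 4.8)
The plan is to verify the three requirements separately: first that \eqref{eq:action} is a well-defined right action of $G\wr S_n$ on $S_n(G)$ by automorphisms, then the compatibility \eqref{eq:precrossed}, and finally the Peiffer identity \eqref{eq:Peiffer}. Only the last of these uses the hypothesis $n\geq 3$; the first two will hold for every $n$. Throughout I write $\varphi_{(h,t)}\colon x\mapsto x^{(h,t)}$ for the candidate action.

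For well-definedness I would first note that for fixed $(h,t)$ the assignment $s_g\mapsto {s^t}_{(gh)^t}$ restricts on each free factor $S_n^{(g)}$ to the inner automorphism $s\mapsto s^t$ of $S_n$ followed by the canonical map into the copy $S_n^{((gh)^t)}$, hence is a homomorphism and extends to the free product $F$. To descend it to $S_n(G)$ I must check that it preserves \eqref{eq:main_rel}: if $s^g=s^{g'}$, equivalently $s$ fixes $g'g^{-1}$ by the Remark, then a one-line computation (using that $g\mapsto g^t$ is a right action by automorphisms) shows that $s^t$ fixes $(g'g^{-1})^t=(g'h)^t\big((gh)^t\big)^{-1}$, so ${s^t}_{(gh)^t}={s^t}_{(g'h)^t}$. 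The right-action axioms then reduce to the identity $(g,1)(h,t)=(1,t)\big((gh)^t,1\big)$ in $G\wr S_n$ together with $(s^t)^{t'}=s^{tt'}$, and each $\varphi_{(h,t)}$ is invertible with inverse $\varphi_{(h,t)^{-1}}$, so the action is by automorphisms. For \eqref{eq:precrossed} it suffices to test on a generator $s_g$: writing $\mu(s_g)=(1,s)^{(g,1)}$, the left side is $(1,s)^{(g,1)(h,t)}$ while the right side is $\mu\big({s^t}_{(gh)^t}\big)=(1,s)^{(1,t)((gh)^t,1)}$, and these agree because $(g,1)(h,t)=(1,t)\big((gh)^t,1\big)$ (both equal $(gh,t)$).

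The substance is the Peiffer identity, where I would argue by reductions rather than a direct word computation. Both maps $m'\mapsto(m\mapsto m^{m'})$ and $m'\mapsto(m\mapsto m^{\mu(m')})$ are right actions of $S_n(G)$ on itself by automorphisms (the second because $\mu$ is a homomorphism and \eqref{eq:action} is an action), so \eqref{eq:Peiffer} holds for all $m,m'$ as soon as the two agree on a generating set of $S_n(G)$, and for each fixed $m'$ the set of $m$ on which the two automorphisms agree is a subgroup. Using $s_g={s_1}^{(g,1)}$ (immediate from \eqref{eq:action}) together with \eqref{eq:precrossed}, conjugation by $s_g$ equals $\varphi_{(g,1)}\circ(\text{conjugation by }s_1)\circ\varphi_{(g,1)}^{-1}$, while the pullback action of $\mu(s_g)=(g,1)^{-1}\mu(s_1)(g,1)$ equals $\varphi_{(g,1)}\circ(\text{action of }\mu(s_1))\circ\varphi_{(g,1)}^{-1}$; conjugating by the common automorphism $\varphi_{(g,1)}$ shows that $s_g$ satisfies Peiffer for all $g$ if and only if $s_1$ does. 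Thus everything comes down to proving that conjugation by $s_1$ agrees with the action of $(1,s)\in G\wr S_n$, and, reducing $s$ and the test element to transpositions $\tau_i=(i\,i{+}1)$, $\tau_j=(j\,j{+}1)$, to the single family of identities
\begin{equation} \nonumber (\tau_j)_1\,(\tau_i)_h\,(\tau_j)_1 = \big(\tau_i^{\tau_j}\big)_{h^{\tau_j}}, \qquad h\in G^n. \end{equation}

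Establishing this last family is the main obstacle, and it is genuinely where $n\geq 3$ is forced: for $n=2$ the group $S_2(G)$ is a free product of copies of $\ZZ/2$, in which no such relation holds, so the crossed module property fails. For $n\geq 3$ I would exploit the presence of a third coordinate: since the generator $(\tau_i)_h$ is insensitive to the coordinates of $h$ that $\tau_i$ fixes, one can route the discrepancy between $h$ and $h^{\tau_j}$ through an auxiliary strand and collapse it using \eqref{eq:main_rel}, the case $|i-j|=1$ requiring in addition the braid-type move. This is precisely the computation underlying the coincidence of the amalgam description with the relations \eqref{Cox1}--\eqref{Cox3}, so it is natural to carry it out in tandem with the identification of $S_n(G)$ with the generator presentation at the end of the section.
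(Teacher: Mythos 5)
Your preliminary reductions are correct and essentially reproduce the paper's: the verification that \eqref{eq:action} is well defined and satisfies \eqref{eq:precrossed} (which the paper dismisses as routine), and then the reduction of the Peiffer identity \eqref{eq:Peiffer} to the single family $(\tau_j)_1\,(\tau_i)_h\,(\tau_j)_1=\big(\tau_i^{\tau_j}\big)_{h^{\tau_j}}$. This is exactly the paper's relation \eqref{eq:Peiffer-simple} specialized to transpositions, reached there by acting with $(h^{-1},1)$ --- the same conjugation trick you use. The problem is that you never actually prove this family; you only gesture at ``routing the discrepancy through an auxiliary strand.'' That idea does work when $\tau_i\neq\tau_j$, and it is precisely the paper's \cref{lem:transp-deff}: replace $h$ by the vector $h'$ agreeing with $h$ except at a position $l$ moved by $\tau_j$ but fixed by $\tau_i$, setting $h'_l:=h_k$ (with $k$ the other index of $\tau_j$); then $h'$ is fixed by $\tau_j$, while $h'h^{-1}$ and $h'(h^{\tau_j})^{-1}$ are supported at positions fixed by $\tau_i$, resp.\ $\tau_i^{\tau_j}$, so \eqref{eq:main_rel} collapses everything. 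In particular no ``braid-type move'' is needed for $|i-j|=1$ --- nor is one available: the amalgam presentation of $S_n(G)$ has no braid relation among its defining relations, and deferring that case to the identification with the Coxeter presentation is circular in spirit, since that identification (via \cref{prop:Q-pres}) itself rests on the transposition lemma you are trying to prove.

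More seriously, your case analysis omits $i=j$ entirely, yet your own reduction requires it: taking adjacent transpositions as the generating set for both $m$ and $m'$, you must verify that conjugation by $(\tau_i)_1$ acts correctly on $(\tau_i)_h$, i.e.\ the case of \emph{equal} transpositions. This is exactly where the auxiliary-strand trick breaks down: a vector $h'$ fixed by $\tau_i$ such that $h'h^{-1}$ is also fixed by $\tau_i$ exists only when $h_i=h_{i+1}$, so \eqref{eq:main_rel} alone cannot bridge $h$ and $h^{\tau_i}$. The paper needs a separate argument here, and it is the second place where $n\geq 3$ enters: choose $k\neq i,j$, write $(ij)_h=(kj)_h(ik)_h(kj)_h$ inside the copy $S_n^{(h)}$, and apply the unequal-transposition case to each factor, after which induction on the length of $s$ and $t$ finishes the proof. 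Without this case (or the edge-disjoint-generating-sets workaround, which in any event fails for $n=3$), your argument does not close up.
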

From the fact that $\mu$ is a crossed module one can deduce that $\Ker(\mu)$ is a central subgroup of $S_n(G)$ and $\mathrm{Im}(\mu)$ is a normal subgroup of $G \wr S_n$.
It is not hard to show that the group $S_2(G)$ is isomorphic to a free product of copies of $S_2$ indexed by elements of $G$ 
 (the center of this group is trivial if $G\neq 1$).
Therefore, the requirement $n\geq 3$ in the statement of~\cref{thm:cms} is essential.

Verification of the fact that formula~\eqref{eq:action} gives a well-defined action of $G\wr S_n$ on $S_n(G)$ 
 that satisfies~\eqref{eq:precrossed} is lengthy but straightforward. Let us show that~\eqref{eq:Peiffer} holds. 
It suffices to verify Peiffer identities only for the generators of $S_n(G)$, for which it takes the form:
\begin{equation} \label{eq:Peiffer-gen} t^{-1}_h s_g t_h = {s^t}_{(gh^{-1})^t\cdot h} \text{ for all $s, t \in S_n$, $g, h\in G^n$.}\end{equation}
If we act on both sides of the above formula by $(h^{-1}, 1) \in G \wr S_n$ we obtain the equality
$t^{-1}_1 s_{gh^{-1}} t_1 = {s^t}_{(gh^{-1})^t}$.
Thus, to prove~\eqref{eq:Peiffer-gen} it suffices to show the following simpler relation:
\begin{equation} \label{eq:Peiffer-simple} t^{-1}_1 s_g t_1 = {s^t}_{g^t} \text{ for all $s, t \in S_n$, $g\in G^n$.}\end{equation}

The key step in the proof is the following lemma.
\begin{lemma} \label{lem:transp-deff} 
 The relation~\eqref{eq:Peiffer-simple} holds in the special case when $s=(ij)$ and $t=(kl)$ are two nonequal transpositions. \end{lemma}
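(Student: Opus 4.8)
The plan is to use the amalgamation relation~\eqref{eq:main_rel} to rewrite \emph{both} factors $t_1$ and $s_g$ appearing in~\eqref{eq:Peiffer-simple} as elements coming from a single copy $S_n^{(g_0)}$ of $S_n$ inside $F$. Once everything lives in one copy, the conjugation $t_1^{-1}s_g t_1$ is governed by the ordinary conjugation rule of the symmetric group, and the left-hand side collapses immediately; the only remaining work is to identify the resulting element with the right-hand side via~\eqref{eq:main_rel} once more.

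Concretely, recall from the Remark that $\sigma_a = \sigma_b$ in $S_n(G)$ precisely when $\sigma$ fixes $ba^{-1} \in G^n$. Thus it suffices to produce an index $g_0 \in G^n$ such that simultaneously $(\mathrm{I})$ $t$ fixes $g_0$, so that $t_1 = t_{g_0}$, and $(\mathrm{II})$ $s$ fixes $g_0 g^{-1}$, so that $s_g = s_{g_0}$. Granting such a $g_0$, the assignment $\sigma \mapsto \sigma_{g_0}$ is a homomorphism $S_n \to S_n(G)$, being the composite of the canonical embedding $S_n^{(g_0)} \hookrightarrow F$ with $F \to S_n(G)$, so that
\[ t_1^{-1} s_g\, t_1 = t_{g_0}^{-1} s_{g_0} t_{g_0} = (t^{-1} s t)_{g_0} = (s^t)_{g_0}. \]
It then remains only to check $(s^t)_{g_0} = (s^t)_{g^t}$, which by~\eqref{eq:main_rel} amounts to the condition $(\mathrm{III})$ that $s^t$ fixes $g_0 (g^t)^{-1}$.

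To construct $g_0$, write $s = (ij)$ and $t = (kl)$ with $\{i,j\} \neq \{k,l\}$. I would set $(g_0)_i = g_i$ and $(g_0)_j = g_j$, which forces $(g_0 g^{-1})_i = (g_0 g^{-1})_j = 1$ and hence secures $(\mathrm{II})$; then declare $(g_0)_k$ and $(g_0)_l$ to be equal, which is exactly $(\mathrm{I})$, leaving the remaining coordinates equal to those of $g$. Since $s$ and $t$ are distinct transpositions, at most one of $k,l$ lies in $\{i,j\}$, so these prescriptions never conflict and determine a well-defined $g_0$. Condition $(\mathrm{III})$ is then verified by inspecting the two coordinates moved by the transposition $s^t$: one separates the case where $\{i,j\}$ and $\{k,l\}$ are disjoint, in which $s^t = s$ and the relevant coordinates of $g_0(g^t)^{-1}$ are trivial by the choice of $g_0$, from the case where the two pairs share a single index, in which $s^t$ is a transposition on the remaining indices and the same triviality follows by combining $(\mathrm{I})$ with $(\mathrm{II})$.

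The delicate point, and the reason the statement is restricted to transpositions, is precisely the simultaneous solvability of $(\mathrm{I})$ and $(\mathrm{II})$ together with the automatic consequence $(\mathrm{III})$. All three rest on the facts that $s$ and $t$ have support of size two, that they share at most one point, and that the conjugate of a transposition is again a transposition; for permutations of larger support one cannot in general move $s_g$ and $t_1$ into a common copy $S_n^{(g_0)}$. This is exactly why the general relation~\eqref{eq:Peiffer-simple} is later assembled from this transposition case rather than attacked directly.
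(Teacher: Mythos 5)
Your proof is correct and is essentially the paper's own argument: the paper also constructs an auxiliary vector $g'$ (agreeing with $g$ except that $g'_l := g_k$, which is exactly your $g_0$ after the harmless normalization $l\notin\{i,j\}$) fixed by $t$ and satisfying your conditions (II) and (III), and then conjugates inside the single copy $S_n^{(g')}$ via relation~\eqref{eq:main_rel}. The only difference is organizational: the paper first isolates the sub-case ``$t$ fixes $g'$'' as a preliminary observation, whereas you state the three compatibility conditions (I)--(III) up front.
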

\begin{proof} 
First of all, we immediately check that~\eqref{eq:Peiffer-simple} holds in the special case when $t$ fixes $g'\in G^n$.
Indeed, by~\eqref{eq:main_rel} we have $t_1 = t_{g'}$, hence 
\begin{equation} \nonumber t^{-1}_1 s_{g'} t_1 = t^{-1}_{g'} s_{g'} t_{g'} = (s^t)_{g'} = (s^t)_{g'^t}.  \end{equation}

Without loss of generality we may assume that $l\neq i$ and $l\neq j$.
Denote by $g'$ the vector which differs from $g$ only at $l$-th position, for which we set $g'_l = g_k$. 
Since the only nontrivial component of $g'g^{-1}$ (resp. $g'g^{-t}$) is located at $l$-th (resp. $k$-th) position,
it is fixed by $s$ (resp. $s^t)$, hence from~\eqref{eq:main_rel} we conclude that $s_g = s_{g'}$ (resp. $(s^t)_{g'} = (s^t)_{g^t}$).
Finally, since $g'$ is fixed by $t$, we get that
\begin{equation} \nonumber t^{-1}_1 s_g t_1 = t^{-1}_1 s_{g'} t_1 = (s^t)_{g'^t} = s^t_{g'} = (s^t)_{g^t}. \text{\qedhere}\end{equation} \end{proof}

\begin{proof}[Proof of~\cref{thm:cms}]
Let us show that~\eqref{eq:Peiffer-simple} holds for arbitrary transpositions $s, t \in S_n$.
It suffices to consider the case $s=t=(ij)$. 
After choosing some $k\neq i,j$ and presenting $(ij)$ as $(kj)(ik)(kj)$ we use \cref{lem:transp-deff}:
\begin{equation} \nonumber (ij)^{-1}_1 (ij)_g (ij)_1 = (ij)^{-1}_1 (kj)_g (ik)_g (kj)_g (ij)_1 =
(ki)_{g^{(ij)}} (jk)_{g^{(ij)}} (ki)_{g^{(ij)}} = (ij)_{g^{(ij)}}. \end{equation}
The proposition now follows by induction on the length of permutations $s$, $t$.
\end{proof}

\subsection{An explicit presentation of $S_n(G)$}
In this section we obtain an explicit presentation of $S_n(G)$ in terms of reflections.
From this presentation we later deduce both the Coxeter presentation for $S_n(G)$ and
 an explicit presentation for the subgroup $HS_n(G)$.

We start with the following simple lemma which is, in essence, a variant of the standard presentation of the symmetric group in terms of reflections
 with some redundant generators and relations added (cf.~\cite[Theorem~2.4.3]{Ca89}).
\begin{lemma} \label{lm:Snpres} For $n\geq 3$ The symmetric group $S_n$ admits presentation with 
$\{(ij) \mid i\neq j,\ 1\leq i,j\leq n\}$ as the set of generators and the following list of relations
 (in every formula distinct letters denote distinct indices):
\begin{align}
(ij)^2 = &\, 1,         \label{Sym1} \\
(ij)^{(jk)} = &\, (ik), \label{Sym2} \\
[(ij), (kl)] = &\,1.    \label{Sym3} \\
(ij) =&\, (ji),         \label{Sym0}
\end{align}
\end{lemma}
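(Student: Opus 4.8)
The plan is to compare the given presentation with the standard Coxeter presentation of $S_n$. Write $\Gamma$ for the group defined by the generators $\{(ij)\}$ and the relations \eqref{Sym1}--\eqref{Sym0}, and let $\phi\colon \Gamma \to S_n$ be the homomorphism sending each generator $(ij)$ to the corresponding transposition of $S_n$. This is well defined, since the transpositions of $S_n$ satisfy \eqref{Sym1}--\eqref{Sym0}, and it is surjective, since transpositions generate $S_n$. It therefore remains to prove that $\phi$ is injective, and for this I will produce a homomorphism out of the Coxeter group that $\phi$ inverts.

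Let $W$ denote the Coxeter group on generators $\sigma_1, \ldots, \sigma_{n-1}$ subject to $\sigma_i^2 = 1$, the braid relations $\sigma_i\sigma_{i+1}\sigma_i = \sigma_{i+1}\sigma_i\sigma_{i+1}$, and the commutation relations $\sigma_i\sigma_j = \sigma_j\sigma_i$ for $|i-j|\geq 2$; recall that $W\cong S_n$ via $\sigma_i \mapsto (i,i+1)$ (cf.~\cite[Theorem~2.4.3]{Ca89}). First I would define $\psi\colon W \to \Gamma$ by $\sigma_i \mapsto (i,i+1)$ and check that the defining relations of $W$ hold for these images in $\Gamma$. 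The relation $\sigma_i^2 = 1$ is \eqref{Sym1}, and the commutation relations for $|i-j|\geq 2$ are instances of \eqref{Sym3}, as then $i, i+1, j, j+1$ are four distinct indices. The only relation requiring work is the braid relation: using \eqref{Sym2} (together with \eqref{Sym0} to reorder indices) one computes in $\Gamma$ that both $(i,i+1)(i+1,i+2)(i,i+1)$ and $(i+1,i+2)(i,i+1)(i+1,i+2)$ reduce to the single generator $(i,i+2)$, whence they agree. This makes $\psi$ a well-defined homomorphism.

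Next I would show that $\psi$ is surjective, i.e. that the adjacent transpositions generate $\Gamma$. By \eqref{Sym0} it suffices to express each $(ij)$ with $i<j$ in terms of the $(k,k+1)$, which follows by induction on $j-i$: the base case $j=i+1$ is immediate, and for $j\geq i+2$ the relation \eqref{Sym2} gives $(ij) = (i,j-1)^{(j-1,j)}$, lowering $j-i$ by one. Hence $\psi$ is onto. Finally, the composite $\phi\circ\psi\colon W\to S_n$ sends $\sigma_i$ to the transposition $(i,i+1)$, so it is precisely the standard isomorphism $W\cong S_n$; in particular $\phi\circ\psi$ is injective, which forces $\psi$ to be injective as well. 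Thus $\psi$ is an isomorphism, and therefore $\phi = (\phi\circ\psi)\circ\psi^{-1}$ is an isomorphism, proving the lemma.

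The main obstacle is the verification of the braid relation inside $\Gamma$ purely from \eqref{Sym2} and \eqref{Sym0}; everything else is formal bookkeeping. The subtlety is that \eqref{Sym2} only permits conjugation by a transposition sharing exactly one index, in the rigid pattern $(ab)^{(bc)}=(ac)$, so one must apply \eqref{Sym0} carefully to bring both $(i,i+1)^{(i+1,i+2)}$ and $(i+1,i+2)^{(i,i+1)}$ into that pattern before invoking it.
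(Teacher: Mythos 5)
Your proof is correct. Note that the paper itself gives no argument for this lemma: it is stated as a simple fact with a pointer to \cite[Theorem~2.4.3]{Ca89}, the point being that the presentation above is the standard presentation of $S_n$ in terms of reflections (transpositions) with redundant generators added (the symbols $(ij)$ for $i>j$, identified by \eqref{Sym0}) and the relations reorganized accordingly. Your route is genuinely different and self-contained modulo a different standard fact: you reduce to the Coxeter presentation on adjacent transpositions by exhibiting homomorphisms both ways. The two computations that make this work are exactly the ones you flag, and both are carried out correctly: the braid relation holds in the presented group because $(i,i+1)^{(i+1,i+2)}$ and $(i+1,i+2)^{(i,i+1)}$ both collapse to $(i,i+2)$ --- the first directly by \eqref{Sym2}, the second after rewriting via \eqref{Sym0} as $(i+2,i+1)^{(i+1,i)}$ so that the rigid pattern of \eqref{Sym2} applies --- and every generator $(ij)$ lies in the subgroup generated by adjacent transpositions by induction on $j-i$ via $(ij)=(i,j-1)^{(j-1,j)}$. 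Combined with the injectivity of $\phi\circ\psi$, which is the standard Coxeter isomorphism, this forces both $\psi$ and $\phi$ to be isomorphisms, as you argue. What your approach buys is that it only quotes the most widely known presentation of $S_n$ (simple transpositions subject to Coxeter relations), at the cost of these explicit in-group computations; what the paper's citation-based approach buys is brevity, since starting from the reflection presentation one merely observes that the extra generators and relations of \eqref{Sym1}--\eqref{Sym0} are redundant. Either way the lemma stands, and your identification of the one non-formal point --- that \eqref{Sym2} only permits conjugation in a fixed index pattern, so \eqref{Sym0} is genuinely needed to symmetrize it --- is accurate.
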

\begin{rem}
To simplify notation, up to the end of this section we adhere to the convention that in our formulae distinct letters denote distinct indices.
\end{rem}

Now we are ready to formulate the main result of this subsection.
\begin{prop} \label{prop:Q-pres} For $n\geq 3$ and arbitrary group $G$ the group $S_n(G)$ admits presentation with 
$\{(ij)_a \mid i\neq j, 1\leq i,j,\leq n, a\in G\}$ 
as the set of generators and the following list of relations.
\begin{align}
(ij)_a^2 = &\, 1,                \label{Q1} \tag{SG1} \\
(ij)_a^{(jk)_b} = &\, (ik)_{ab}, \label{Q2} \tag{SG2} \\
[(ij)_a, (kl)_b] = &\,1.         \label{Q3} \tag{SG3} \\
(ij)_a =&\, (ji)_{a^{-1}}        \label{Q4} \tag{SG4} 
\end{align}
\end{prop}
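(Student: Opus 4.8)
The plan is to identify $S_n(G)$ with the group $P$ defined by the presentation \eqref{Q1}--\eqref{Q4} by constructing mutually inverse homomorphisms between them. Throughout I abbreviate $p_{ij}(g) := g_i^{-1}g_j \in G$ for $g \in G^n$ and $i \neq j$; two facts about this parameter drive the whole argument, namely the cocycle-type identity $p_{ij}(g)\,p_{jk}(g) = p_{ik}(g)$ and the invariance $p_{ij}(ug) = p_{ij}(g)$ under left multiplication of all coordinates of $g$ by a fixed $u \in G$.

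I would first observe that $S_n(G)$ is generated by the transposition elements $(ij)_g$: each structure map $S_n^{(g)} \to F \to S_n(G)$ is a homomorphism and $S_n$ is generated by transpositions, so every $s_g$ is a product of such elements. The transposition case of \eqref{eq:main_rel} asserts exactly that $(ij)_g = (ij)_h$ whenever $p_{ij}(g) = p_{ij}(h)$, since $(ij)$ fixes $hg^{-1}$ precisely when $h_ig_i^{-1} = h_jg_j^{-1}$. Hence $(ij)_g$ depends only on $a := p_{ij}(g)$, and I write $(ij)_a$ for this common value, so that $S_n(G)$ is generated by the $(ij)_a$. Checking that these satisfy the defining relations of $P$ is then a translation exercise via \cref{lm:Snpres}: \eqref{Q1} and \eqref{Q4} come from $(ij)_g^2 = 1$ and $(ij)_g = (ji)_g$ inside a single copy $S_n^{(g)}$, while for \eqref{Q2} and \eqref{Q3} I pick one vector realizing all parameters at once — for \eqref{Q2}, any $g$ with $g_i = 1$, $g_j = a$, $g_k = ab$, so that $p_{ij}(g)=a$, $p_{jk}(g)=b$, $p_{ik}(g)=ab$ — and read the relation off from \eqref{Sym2}, resp.\ \eqref{Sym3}, applied in $S_n^{(g)}$. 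This yields a surjection $f \colon P \to S_n(G)$.

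For the inverse I would use \cref{lm:Snpres} in the other direction: for each $g$ the assignment $(ij) \mapsto (ij)_{p_{ij}(g)}$ sends the relations \eqref{Sym1}--\eqref{Sym0} to \eqref{Q1}--\eqref{Q4} (the identity $p_{ij}(g)\,p_{jk}(g) = p_{ik}(g)$ being exactly what \eqref{Sym2} needs), hence defines a homomorphism $r_g \colon S_n^{(g)} \to P$, and the $r_g$ assemble into $r \colon F \to P$. The crux is to show that $r$ respects \eqref{eq:main_rel}, i.e.\ that $r_g(s) = r_h(s)$ whenever $s$ fixes $w := hg^{-1}$. I expect this to be the main obstacle, precisely because it is the general (non-transposition) form of \eqref{eq:main_rel} that must be handled: factoring $s$ into transpositions and comparing term by term fails, since the individual factors of $s$ need not fix $w$. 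The way around it is to factor $s$ compatibly with its cycle decomposition, writing each cycle $(i_1\cdots i_r)$ as a product of transpositions supported on $\{i_1,\dots,i_r\}$. As $s$ fixes $w$, the vector $w$ is constant — say equal to $u$ — on the support of each such cycle, so $h$ and $g$ differ there by left multiplication by $u$; the invariance of $p_{ij}$ then forces every parameter occurring in $r_h(s)$ to agree with the corresponding one in $r_g(s)$, giving $r_g(s) = r_h(s)$. Thus $r$ descends to a homomorphism $S_n(G) \to P$.

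It remains to note that $f$ and $r$ are mutually inverse, which is immediate on generators: $r(f((ij)_a)) = r((ij)_g) = (ij)_{p_{ij}(g)} = (ij)_a$, and if $s = \prod_\ell (i_\ell j_\ell)$ then $f(r(s_g)) = f\!\left(\prod_\ell (i_\ell j_\ell)_{p_{i_\ell j_\ell}(g)}\right) = \prod_\ell (i_\ell j_\ell)_g = s_g$. Hence $P \cong S_n(G)$, which is the assertion of the proposition. Everything outside the cycle-wise factorization argument of the previous paragraph is bookkeeping built on \cref{lm:Snpres}, so I anticipate that step to be the only one requiring real care.
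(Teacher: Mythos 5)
Your proof is correct and follows essentially the same route as the paper: the same pair of mutually inverse homomorphisms (your $f$ and $r$ are exactly the paper's $\varphi$ and $\psi$), with the crux --- showing $r_g(s)=r_h(s)$ whenever $s$ fixes $hg^{-1}$ --- handled by the same cycle-decomposition argument. The only cosmetic differences are that you verify \eqref{Q1}--\eqref{Q4} inside $S_n(G)$ directly by realizing all parameters with a single vector, where the paper instead invokes \cref{lem:transp-deff}, and that you spell out the general case of the amalgamation check that the paper dismisses as ``similar''.
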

\begin{proof}
Denote by $S'$ the group from the statement of the proposition.
For $1\leq i\leq n$ and $x\in G$ denote by $x[i]$ the element of $G^n$ 
 whose only nontrivial component equals $x$ and is located in the $i$-th position.

It is not hard to deduce from the definition of $S_n(G)$ and~\cref{lem:transp-deff} 
 that the formula $(ij)_a \mapsto (ij)_{a[j]}$ gives a well-defined map $\varphi\colon S'\to S_n(G)$.

Now we are going to construct the map $\psi\colon S_n(G)\to S'$ in the opposite direction. 
Using the presentation of $S_n$ given by~\cref{lm:Snpres}  we define for a fixed $g\in G^n$ the map 
 $\psi_g\colon S_n \to S'$ by $\psi_g((ij)) = (ij)_{g_i^{-1} g_j}.$
It is obvious that $\psi_g$ preserves the defining relations \eqref{Sym1}--\eqref{Sym0} of $S_n$. 
It remains to show that the equality $\psi_g(s) = \psi_h(s)$ holds whenever $g, h\in G^n$ and $s\in S_n$ are such that $s_g=s_h$.

Indeed, if $hg^{-1}$ is fixed by $s$ then for every $1\leq i\leq n$ we have $(hg^{-1})_i = (hg^{-1})_{s(i)}$, or equivalently
$h_i^{-1} h_{s(i)} = g_i^{-1} g_{s(i)}$. 
For example, if $s$ is a cycle of length $p$, i.\,e. $s=(i_1, i_2, \ldots i_p)$ with $i_{k+1} = s(i_k)$ we get that
\begin{multline} \nonumber
 \psi_g(s) = \psi_g\left({\prod\limits_{k=1}^{p-1}(i_k, i_{k+1})}\right) = 
 \prod\limits_{k=1}^{p-1}\left(i_k, i_{k+1}\right)_{g_{i_k}^{-1} g_{i_{k+1}}} = 
 \prod\limits_{k=1}^{p-1}\left(i_k, i_{k+1}\right)_{h_{i_k}^{-1} h_{i_{k+1}}} = \psi_h(s).
\end{multline}
The proof in the general case is similar.
Verification of the fact that $\psi$ and $\varphi$ are mutually inverse is also immediate.
\end{proof}

\subsection{Coxeter-like presentation for $S_n(G)$.}
The aim of this subsection is to show the following.
\begin{prop}\label{prop:Cox-Amalgam}
The group $S_n(G)$ defined in~\cref{prop:Q-pres} coincides with the group defined in the introduction by means of Coxeter-like presentation
(denote the latter group by $S_n^{C}(G)$).
\end{prop}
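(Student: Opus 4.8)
The plan is to exhibit mutually inverse homomorphisms between $S_n^C(G)$ and $S_n(G)$, matching each Coxeter generator $s_i(a)$ with the adjacent reflection $(i,i+1)_a$. First I would define $\alpha \colon S_n^C(G) \to S_n(G)$ by $\alpha(s_i(a)) = (i,i+1)_a$ and check that the defining relations survive. Relation \eqref{Cox1} is literally \eqref{Q1}, while \eqref{Cox3} is an instance of \eqref{Q3}, since $|i-j| \geq 2$ forces the four occurring indices to be distinct. The only substantial point is the braid relation \eqref{Cox2}; abbreviating the three relevant indices as $1,2,3$, it reads $(12)_a (23)_b (12)_c = (23)_{a^{-1}cb}(12)_a(23)_b$. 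I would verify it by multiplying the left-hand side by the inverse of the right-hand side and collapsing: since every generator is an involution by \eqref{Q1}, this product equals $(12)_a\,(23)_b (12)_c (23)_b\,(12)_a (23)_{a^{-1}cb}$; applying \eqref{Q2} in its conjugated form gives $(23)_b(12)_c(23)_b = (13)_{cb}$ and then $(12)_a (13)_{cb} (12)_a = (23)_{a^{-1}cb}$, so the whole expression reduces to $(23)_{a^{-1}cb}^{\,2} = 1$. Hence $\alpha$ is well defined; it is moreover surjective, because \eqref{Q2} and \eqref{Q4} let one rewrite every reversed or non-adjacent generator $(ij)_a$ as a word in the adjacent reflections.

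Next I would construct the inverse $\beta \colon S_n(G) \to S_n^C(G)$. On adjacent generators I set $\beta((i,i+1)_a) = s_i(a)$; the remaining generators are then forced, \eqref{Q4} imposing $\beta((i+1,i)_a) = s_i(a^{-1})$ and, for $j > i+1$, the relation $(i,j)_a = (j-1,j)_1 (i,j-1)_a (j-1,j)_1$ (a consequence of \eqref{Q1} and \eqref{Q2}) imposing the span-reducing recursion $\beta((i,j)_a) = s_{j-1}(1)\,\beta((i,j-1)_a)\,s_{j-1}(1)$. To see that $\beta$ descends to $S_n(G)$ I must check that \eqref{Q1}--\eqref{Q4} map to identities in $S_n^C(G)$. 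Relation \eqref{Q1} holds because $\beta((ij)_a)$ is a conjugate of an involution, \eqref{Q4} holds once its compatibility with the recursion is confirmed, and \eqref{Q3} together with the general form of \eqref{Q2} would be handled by induction on the spans of the indices involved, the base (consecutive-index) cases being exactly \eqref{Cox3} and \eqref{Cox2}. Since $\beta\circ\alpha$ and $\alpha\circ\beta$ are the identity on generators, this identifies the two groups.

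I expect the main obstacle to be the last step in the backward direction: verifying \eqref{Q2} in full generality, $(ij)_a^{(jk)_b} = (ik)_{ab}$ for arbitrary distinct $i,j,k$, after unwinding the recursive definition of $\beta$ on non-adjacent transpositions. The recursion only records the instances of \eqref{Q2} that lower the top index, so the relative positions of $i$, $j$, $k$ split into several cases, and in each one the identity must be reduced to a consecutive-index configuration by repeated use of \eqref{Cox2} and \eqref{Cox3}. Organizing this bookkeeping cleanly, rather than the short braid computation above, is where the real effort lies.
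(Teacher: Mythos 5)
Your overall strategy is the same as the paper's: exhibit mutually inverse homomorphisms, sending $s_i(a)$ to $(i,i+1)_a$ and defining the backward map by a span-reducing recursion. Your forward verification of \eqref{Cox2} inside $S_n(G)$ is correct, and your recursion $\beta((i,j)_a) = s_{j-1}(1)\,\beta((i,j-1)_a)\,s_{j-1}(1)$ is literally the paper's formula: in \cref{lem:coxeter-amalgam} the inverse map is $g_t((ij)_a) = (ik)_a^{(kj)_1}$ for any $i<k<j$, of which your definition is the choice $k=j-1$. However, the step you defer to the last paragraph --- checking that the recursion respects \eqref{Q2} and \eqref{Q3} in all index configurations --- is not peripheral bookkeeping; it is essentially the whole content of the paper's proof, and as submitted your argument stops exactly where that proof begins. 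That is a genuine gap.

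Two ingredients that the paper supplies, and your sketch does not, are worth naming. First, ``induction on spans'' needs a precise, strengthened induction hypothesis to be well-founded: the relation $(ij)_a^{(jk)_b}=(ik)_{ab}$ mixes three spans, and the resulting generator $(ik)_{ab}$ can have strictly larger span than both inputs, so one cannot simply induct on the span of the relation being verified. The paper solves this by interpolating a ladder of presentations $S_n^C(G)\cong S_n(1,G)\to\cdots\to S_n(n-1,G)\to S_n(G)$, where $S_n(t,G)$ is generated by the $(ij)_a$ with $j-i\le t$ subject to \eqref{CoxMix1}--\eqref{CoxMix4}; note that this package includes the symmetric ``mixed'' braid relation \eqref{CoxMix2}, $(ij)_a^{(jk)_b}=(jk)_{b'}^{(ij)_{a'}}$ for $ab=a'b'$, which is what is actually used to push conjugators past one another in the key computation \eqref{CoxMix2New}. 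Second, the paper proves that $g_t((ij)_a)$ is independent of the choice of the intermediate index $k$; this freedom to re-route the reduction through a different middle index is used repeatedly in the case analysis, and it needs its own argument (via \eqref{CoxMix3} and \eqref{CoxMix4}). Your fixed choice $k=j-1$ does not give you this flexibility for free. So: right construction, correct forward half, but to complete the proof you should either adopt the paper's intermediate groups as the formal induction hypothesis, or build the equivalent data (mixed braid relation at all smaller spans, plus independence of the middle index) into your single induction.
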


The idea of the proof is to construct a sequence of intermediate groups
\[S_n^C(G) \to S_n(1, G) \to \ldots \to S_n(n-1,G) \to S_n(G)\]
and then show that all the maps between these groups are isomorphisms.

\begin{df}
For $1\leq t\leq n-1$ consider the group $S_n(t, G)$ defined by the set of generators 
$\{(ij)_a \mid 1\leq i<j \leq n,\ j-i\leq t\}$ subject to the following relations:
\begin{align}
(ij)_a^2                      = &\, 1,                   &               \label{CoxMix1} \\
(ij)_a^{(jk)_b}               = &\, (jk)_{b'}^{(ij)_{a'}}, & \text{provided } ab = a'b',      \label{CoxMix2} \\
(ij)_a (kl)_b                 = &\, (kl)_b (ij)_a,       &               \label{CoxMix3} \\
(ij)_a ^ {(jk)_b}             = &\, (ik)_{ab}.           &               \label{CoxMix4}
\end{align}
In the above relations indices $i,j,k,l$ satisfy the inequalities ensuring that all the generators appearing in the formulae are well-defined.
\end{df}

In the case $t=1$ the relation~\eqref{CoxMix4} becomes vacuous and the remaining relations have the same form as \eqref{Cox1}---\eqref{Cox3}.
Thus, the map $S_n^C(G) \to S_n(1, G)$ defined by $s_i(g)\mapsto (i,i+1)_g$ is an isomorphism. 

On the other hand, in the case $t=n-1$ the obvious embedding of generators induces a map $S_n(n-1,G) \to S_n(G)$ 
 (we consider the latter group in the presentation of~\cref{prop:Q-pres}).
We claim that this map is an isomorphism. Indeed, define the map $\theta\colon S_n(G) \to S_n(n-1, G)$ in the opposite direction by the following formula:
\[\theta((ij)_a) = \left\{\def\arraystretch{1.2}%
  \begin{array}{@{}c@{\quad}l@{}}
    (ij)_a & \text{if $i < j$,}\\
    (ji)_{a^{-1}} & \text{if $j < i$.}\\
  \end{array}\right.\]
Only the fact that $\theta$ preserves the relation~\eqref{Q2} is not immediately obvious.
Fix three indices $i,j,k$ satisfying $i<j<k$ and let $p,q,r$ be any permutation of these indices.
One has to check that the equality $\theta((pq)_a^{(qr)_b}) = \theta((pr)_{ab})$ holds.
For example, the relation $\theta((jk)_a^{(ki)_b}) = \theta((ji)_{ab})$ can be proved as follows:
\[ \theta((jk)_a^{(ki)_b}) = (jk)_a^{(ik)_{b^{-1}}} = {(ik)_{b^{-1}}}^{(jk)_a} = (ij)_{b^{-1}a^{-1}} = \theta((ji)_{ab}).\]
In the above formula the second equality holds by~\eqref{CoxMix2} and the third one holds by~\eqref{CoxMix4} (in which both sides are conjugated by $(jk)_a$).
Verification of the other 5 equalities is similar and is left as an exercise.

To finish the proof of~\cref{prop:Cox-Amalgam} it remains to show the following lemma.
\begin{lemma} \label{lem:coxeter-amalgam}
For each $1 \leq t \leq n-2$ the map $f_t \colon S_n(t, G)\to S_n(t+1, G)$ induced by the obvious embedding of generators is an isomorphism.
\end{lemma}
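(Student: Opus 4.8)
The plan is to show that $f_t$ is an isomorphism by constructing an explicit inverse $g_{t+1}\colon S_n(t+1,G)\to S_n(t,G)$ and checking that $f_t$ and $g_{t+1}$ are mutually inverse on generators. On a generator $(ij)_a$ of range $j-i\le t$ I would set $g_{t+1}((ij)_a)=(ij)_a$, while on a generator of range $j-i=t+1$ I would fix an index $i<k<j$ and set $g_{t+1}((ij)_a)=(ik)_a^{(kj)_1}$; both $(ik)$ and $(kj)$ then have range $\le t$, so the right-hand side is a word in the generators of $S_n(t,G)$. Granting that $g_{t+1}$ is well defined, the two composites are the identity: $g_{t+1}\circ f_t$ fixes every generator of $S_n(t,G)$, and $f_t\circ g_{t+1}$ sends a range-$(t+1)$ generator $(ij)_a$ to $(ik)_a^{(kj)_1}=(ij)_a$ by~\eqref{CoxMix4}, which is a relation of $S_n(t+1,G)$ since there $(ij)$ is an admissible generator. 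The same computation shows at once that $f_t$ is surjective.

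Before checking relations I would record two properties of the composite that make $g_{t+1}$ unambiguous. First, for $i<k<j$ with $(ik),(kj)$ of range $\le t$, the element $(ik)_a^{(kj)_b}$ depends only on the product $ab$: applying~\eqref{CoxMix2} to the factorizations $(a,b)$ and $(a',b')$ with $a'b'=ab$ shows that both $(ik)_a^{(kj)_b}$ and $(ik)_{a'}^{(kj)_{b'}}$ equal the common value $(kj)_b^{(ik)_a}$. Second, the composite is independent of the splitting index: for $i<k<k'<j$ one has
\[(ik)_a^{(kj)_1}=\bigl((ik)_a^{(kk')_1}\bigr)^{(k'j)_1}=(ik')_a^{(k'j)_1},\]
where the first equality uses that $(ik)_a$ commutes with $(k'j)_1$ by~\eqref{CoxMix3} (their supports $\{i,k\}$ and $\{k',j\}$ are disjoint) together with $(kj)_1=(kk')_1^{(k'j)_1}$ from~\eqref{CoxMix4}, and the second uses $(ik)_a^{(kk')_1}=(ik')_a$ from~\eqref{CoxMix4}; here every subinterval of $(i,j)$ has range $\le t$, so all the \eqref{CoxMix4}-relations invoked are available in $S_n(t,G)$. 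Thus $g_{t+1}$ is given by a well-defined element $w_{ij}(a)$ for each range-$(t+1)$ generator, and the two independences will supply the consistency needed below.

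It then remains to verify that $g_{t+1}$ respects the four families of relations of $S_n(t+1,G)$. Relations among generators of range $\le t$ are preserved because $g_{t+1}$ restricts to the identity there; \eqref{CoxMix1} for a range-$(t+1)$ generator holds since $w_{ij}(a)$ is a conjugate of the involution $(ik)_a$; and \eqref{CoxMix4} whose target $(ik)$ has range $t+1$ holds by the very definition of $w_{ik}$, the independence of the splitting index and of the factorization guaranteeing consistency over all admissible middle indices. The substance of the proof therefore lies in the relations~\eqref{CoxMix3} and~\eqref{CoxMix2} in which a range-$(t+1)$ generator occurs. For~\eqref{CoxMix3} I expect to exploit the freedom in the splitting index: given a range-$(t+1)$ generator $(pq)_a$ to be commuted past a disjointly supported $(rs)_b$, one chooses $k\in(p,q)$ outside $\{r,s\}$ so that $w_{pq}(a)$ becomes a conjugate of generators whose supports avoid $\{r,s\}$, reducing to~\eqref{CoxMix3} in $S_n(t,G)$, with the tight cases (small $t$, where no such $k$ exists) handled by direct computation with the parametrized braid relation. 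I expect the main obstacle to be~\eqref{CoxMix2} with a range-$(t+1)$ generator, where the two sides must be shown equal after substitution and, in particular, independent of the factorization $ab=a'b'$; this cannot be reduced to disjointness and requires a careful manipulation purely within $S_n(t,G)$ using~\eqref{CoxMix2},~\eqref{CoxMix3} and~\eqref{CoxMix4}. This is the technical heart of the lemma.
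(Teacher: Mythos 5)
Your construction coincides with the paper's own: the same inverse map (identity on generators of range at most $t$, and $(ij)_a\mapsto (ik)_a^{(kj)_1}$ on generators of range $t+1$), the same proof that the value is independent of the splitting index (commute $(ik)_a$ past $(k'j)_1$ via \eqref{CoxMix3}, then apply \eqref{CoxMix4}), and your subsidiary observations are all correct: independence of the factorization $ab=a'b'$, mutual inverseness of the two maps on generators, surjectivity of $f_t$, and preservation of \eqref{CoxMix1}, of \eqref{CoxMix4}, and of every relation involving only generators of range at most $t$.

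There is, however, a genuine gap, and it is exactly the step you postpone: verifying that the proposed inverse respects \eqref{CoxMix2} when a generator of range $t+1$ occurs. You correctly call this ``the technical heart of the lemma,'' but naming the obstacle is not overcoming it; as written, your argument only shows that \emph{if} the inverse map is well defined \emph{then} $f_t$ is an isomorphism. The paper's proof consists mostly of precisely this missing verification. In the case $j-i=t+1$, $k-j\le t$ it is the long chain of equalities \eqref{CoxMix2New}: one splits $(ij)_a=(ij')_a^{(j'j)_1}$ and then repeatedly trades conjugators inside $S_n(t,G)$ using \eqref{CoxMix2}, \eqref{CoxMix3} and \eqref{CoxMix4}. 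Moreover, the remaining case $k-j=t+1$ is not handled independently: the paper's computation for it invokes the identity \eqref{CoxMix2New} just established, so the two long-generator cases must be treated in a definite order, a structural point your sketch does not anticipate. A similar, though less serious, remark applies to \eqref{CoxMix3}: your reduction to disjoint supports by a good choice of splitting index genuinely fails in the tight configurations you mention (e.g.\ two range-$(t+1)$ generators whose admissible splittings interfere), and the direct computation you defer there is again of the same braid-relation type; the paper leaves it as an exercise only after having displayed in full the harder \eqref{CoxMix2} computation it imitates.
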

\begin{proof}
Let $(ij)_a$ be a generator of $S_n(t+1, G)$, define the map $g_t$ inverse to $f_t$ as follows:
\[g_t((ij)_a) = \left\{\def\arraystretch{1.2}%
  \begin{array}{@{}c@{\quad}l@{}}
    (ij)_a & \text{if $j-i\leq t$,}\\
    (ik)_{a}^{(kj)_1} & \text{if $j-i=t+1$ for some $i<k<j$.}\\
  \end{array}\right.\]
The value of $g_t((ij)_a)$ does not depend on the choice of $k$.
Indeed, let $k'> k$ be another index lying between $i$ and $j$. 
By~\eqref{CoxMix3} we have the equality $(ik)_a^{(k'j)_1} = (ik)_a$ hence
\[(ik)_a ^{(kj)_1} = (ik)_a ^{\left((kk')_1 ^{(k'j)_1}\right)} = (ik)_a^{(kk')_{1} (k'j)_{1}} = (ik')_a ^{(k'j)_1}.\]

Let us show that $g_t$ is well defined (i.\,e. preserves the defining relations of $S_n(t+1, G)$).
Below we outline the proof of this fact for relation~\eqref{CoxMix2} and leave the verification for other relations as an exercise.

First consider the case $k-j\leq t$ and $j-i=t+1$. Choose some index $j'$ between $i$ and $j$.
Using the defining relations of $S_n(t,G)$ we get the following chain of equalities:
\begin{multline}\label{CoxMix2New} g_t((ij)_a^{(jk)_b}) = (ij')_{a} ^ {(j'j)_1 (jk)_b} = (ij')_a ^{ (jk)_b (j'j)_1 (jk)_b } = 
(ij')_a ^{(jk)_1 (j'j)_b (jk)_1} = \\ = (ij')_a ^{(j'j)_b (jk)_1} = (j'j)_{b'}^ {(ij')_{a'} (jk)_1} = 
 (j'j)_{b'}^ {(jk)_1 (ij')_{a'}} = \\ = (jk)_{b'}^{(j'j)_1 (ij')_{a'}} = (jk)_{b'}^{(ij')_{a'} (j'j)_1 (ij')_{a'}}
 = (jk)_{b'}^{(j'j)_1 (ij')_{a'} (j'j)_1} = g_t((jk)_{b'}^{(ij)_{a'}}). \end{multline}
The proof in the case $k-j = t+1$ is similar:
\[g_t((ij)_a^{(jk)_b}) = (ij)_a ^ {(k'k)_1 (jk')_b (k'k)_1} = (ij)_a ^ {(jk')_b (k'k)_1} =
(jk')_{b'} ^ {(ij)_{a'} (k'k)_1} = (jk')_{b'} ^ {(k'k)_1 (ij)_{a'}} = g_t((jk)_{b'}^{(ij)_{a'}}).\]
Here $k'$ is some index lying between $j$ and $k$.
In the central equality we use either~\eqref{CoxMix2} or~\eqref{CoxMix2New} depending on whether $j-i\leq t$ or $j-i=t+1$.
\end{proof} 

\section{Comparison with extensions of type \texorpdfstring{$\mathfrak{H}_n(G)$}{Hn(G)}} \label{sec:Hnextensions}
The aim of this section is to express the group $S_n(G)$ in terms of Rehmann's extensions of
 type $\mathfrak{H}_n$ and thus finish the proof of~\cref{thm:summary}. 

\subsection{Presentation for the subgroup $HS_n(G)$}
Our first goal is to obtain an explicit presentation of the subgroup $HS_n(G)$ of $S_n(G)$.

For every $i\neq j$ and $a\in G$ we define the element $h_{ij}(a) \in HS_n(G)$ as follows:
\begin{equation} \label{eq:h-def} 
h_{ij}(a) = (ij)_{a} \cdot (ij)_1. 
\end{equation}
It is not hard to show that $h_{ij}(a)$ form a generating set for $HS_n(G)$.
In fact, there is an explicit formula how an element of $HS_n(G)$ originally expressed through $(ij)_{a}$'s can be rewritten in terms of $h_{ij}(a)$.
Indeed, if $h$ lies in $HS_n(G)$ and is written as $\prod_{k=1}^N(i_k j_k)_{a_k}$ for some $i_k\neq j_k$ and $a_k\in G$ then it can be rewritten as follows:
\begin{equation} \label{eq:rp} \tag{$\tau$}
 h = \prod_{k=1}^N h_{\sigma_k(i_k), \sigma_k(j_k)}(a_k),\text{ where } \sigma_k=\prod_{s=1}^{k-1} (i_s j_s) \in S_n. \end{equation}

We briefly recall the notion of a {\it rewriting process} given in~\cite[\S~2.3]{MKS76}.
If $G$ is a group presented by generators $a_\nu$ and relations $R_{\mu}(a_{\nu})$ and $H$ is its subgroup with a generating set $J_i(a_\nu)$ then
 a {\it rewriting process for $H$} is a function which maps every word $u$ in alphabet $a_\nu$ to a word $v$ in alphabet $s_i$ such that
 $u$ and $v[s_i:=J_i]$ define the same element of $G$ whenever $u$ represents an element of $H$.

With this terminology, the mapping~\eqref{eq:rp} defined above is a rewriting process for the subgroup $HS_n(G)$.
Since it does not arise as a rewriting process corresponding to a coset representative function,
this process is {\it not} a Reidemeister rewriting process in the sense of~\cite[\S~2.3]{MKS76}.
However, it still satisfies the following two key properties of a Reidemeister rewriting process (cf. with (v) and (vi) of~\cite[\S~2.3]{MKS76}): 
\begin{itemize}
 \item if $U$ and $U^*$ are freely equal words in $(ij)_a$ then $\tau(U)$ and $\tau(U^*)$ are also freely equal words in $h_{ij}(a)$;
 \item if $U_1$ and $U_2$ are two words in $(ij)_a$ defining certain elements of $HS_n(G)$ then the words $\tau(U_1U_2)$ and $\tau(U_1) \tau(U_2)$ are equal.
\end{itemize}
Using these two properties and repeating the arguments used in the proof of~\cite[Theorem~2.8]{MKS76}
one can simplify the generic presentation of $HS_n(G)$ from~\cite[Theorem~2.6]{MKS76} and obtain the following.
\begin{lemma} \label{lm:h-gen}

 For $n\geq 3$ the group $HS_n(G)$ admits presentation on the generators $h_{ij}(a)$ with the following two families of defining relations:
 \begin{align}
  h_{ij}(a) = &\, \tau\left((ij)_a \cdot (ij)_1\right); \label{eq:tau1} \\
  \tau(KRK^{-1}) = &\, 1, \label{eq:tau2} 
 \end{align}  
 where $R$ varies over relations of~\cref{prop:Q-pres} and $K$ is any word in $(ij)_1$, $i\neq j$.
\end{lemma}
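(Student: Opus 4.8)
The plan is to obtain this presentation by the (generalized) Reidemeister--Schreier method of~\cite[\S~2.3]{MKS76}, carried out with the rewriting process~$\tau$ of~\eqref{eq:rp} in place of the canonical Reidemeister process.

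First I would fix the transversal for $HS_n(G)=\Ker(\pi)$ in $S_n(G)$ supplied by the splitting of~\eqref{eq:ex-seq}: for each element of $S_n$ choose a word $K$ in the generators $(ij)_1$ representing it, so that these words run over a full set of coset representatives (here we use that $\pi$ merely deletes subscripts). With this choice the Schreier generator attached to the trivial representative and a letter $(ij)_a$ is $(ij)_a(ij)_1^{-1}=(ij)_a(ij)_1=h_{ij}(a)$, where the last equality uses~\eqref{Q1}. Combined with this transversal, \cref{prop:Q-pres} lets us apply~\cite[Theorem~2.6]{MKS76}, which returns the generic Reidemeister--Schreier presentation of $HS_n(G)$: its generators are all Schreier generators $s_{K,(ij)_a}$, and its relators are the rewrites of $K R K^{-1}$, with $R$ ranging over the relations of~\cref{prop:Q-pres} and $K$ over the representatives, together with the relations trivializing those $s_{K,(ij)_a}$ for which $K(ij)_a$ is already a representative.

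Next I would reduce this presentation to the generating set $\{h_{ij}(a)\}$. This is the step that uses the two properties of~$\tau$ recorded above — preservation of free equality and multiplicativity on words representing subgroup elements: they are exactly the hypotheses under which the argument of~\cite[Theorem~2.8]{MKS76} can be rerun with~$\tau$ replacing the canonical rewriting. That argument re-expresses each Schreier generator through~$\tau$ and reorganizes the relators, leaving the generators $h_{ij}(a)$ subject to the naming relations~\eqref{eq:tau1}, which identify each $h_{ij}(a)$ with the $\tau$-image of the word $(ij)_a(ij)_1$ that it abbreviates (these also subsume the trivial-generator relations of the generic presentation), and the rewritten relators~\eqref{eq:tau2}. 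I would also confirm, directly from~\eqref{eq:rp}, that the $h_{ij}(a)$ generate $HS_n(G)$ and that~$\tau$ is an honest rewriting process for them, i.e.\ that any word in the $(ij)_a$ representing an element of the kernel is carried by~$\tau$ to the same element written in the $h_{ij}(a)$.

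The main obstacle is precisely this substitution of~$\tau$ for the canonical Reidemeister process. Since~$\tau$ does not descend from a coset representative function, I cannot invoke~\cite[Theorem~2.6]{MKS76} with~$\tau$ already built in; instead I must follow the proof of~\cite[Theorem~2.8]{MKS76} and check that the two key properties are enough to guarantee that replacing the canonical rewriting by~$\tau$ leaves the normal closure of the relation set unchanged. Once this is justified, the families~\eqref{eq:tau1} and~\eqref{eq:tau2} are exactly what the method produces, and the residual bookkeeping — enumerating the surviving relators $KRK^{-1}$ and folding the trivial-generator relations into the form~\eqref{eq:tau1} — is routine and I would leave it to the reader.
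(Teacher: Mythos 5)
Your proposal is correct and takes essentially the same route as the paper: the paper likewise feeds the rewriting process $\tau$ into the machinery of \cite[\S~2.3]{MKS76}, using exactly the observation that the two key properties (preservation of free equality, multiplicativity on words representing kernel elements) are all that the simplification argument of \cite[Theorem~2.8]{MKS76} requires, so that it can be rerun for $\tau$ on the generic presentation of \cite[Theorem~2.6]{MKS76}. Your only deviations are cosmetic bookkeeping: you detour through the transversal-based Schreier presentation before converting to the $\tau$-presentation, and you assert that Theorem~2.6 cannot be invoked with $\tau$ directly, whereas in fact it can (it holds for an arbitrary rewriting process; only the Theorem~2.8 simplification needs the two properties), which is how the paper proceeds.
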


From~\eqref{eq:tau1} one immediately obtains the equality $h_{ij}(1)=1$, $i\neq j$.
Thus, if we denote by $\sigma$ the permutation corresponding to a word $K$ and let $S_n$ act on $h_{ij}(a)$'s in the natural way
 we get that the word $\tau(KRK^{-1})$ is equivalent to ${}^{\sigma}\tau(R)$ (modulo relations $h_{ij}(1)=1$). 

Since the image of every relator from~\cref{prop:Q-pres} under the action of $S_n$
 is still a relator of the same form, only relations of the form $\tau(R)=1$ are, in fact, 
 needed for the presentation of $HS_n(G)$.
Writing down what $\tau(R)$ is for each of \eqref{Q1}--\eqref{Q4} we get the following.
 
\begin{prop} \label{prop:HSpres} For $n\geq 3$ the group $HS_n(G)$ admits presentation with generators $h_{ij}(a)$ and the following list of relations:
\begin{align}
h_{ij}(1)                     = &\, 1,              \tag{R0} \label{H0} \\
h_{ij}(a) h_{ji}(a)           = &\, 1,              \tag{R1} \label{H1} \\
h_{jk}(b) h_{ik}(a) h_{ij}(b) = &\, h_{ik}(ab),     \tag{R2} \label{H2} \\
[h_{ij}(a), h_{kl}(b)]        = &\, 1,              \tag{R3} \label{H3} \\
h_{ij}(a)^{-1}                = &\, h_{ij}(a^{-1}). \tag{R4} \label{H4}
\end{align}
\end{prop}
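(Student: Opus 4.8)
The plan is to start from the presentation furnished by \cref{lm:h-gen} and to eliminate its two infinite families of relations \eqref{eq:tau1} and \eqref{eq:tau2} in favour of the short list \eqref{H0}--\eqref{H4}. The whole argument is a bookkeeping computation with the rewriting process \eqref{eq:rp}, so I would organize it in three stages: dispose of \eqref{eq:tau1}, collapse the conjugated family \eqref{eq:tau2} to the relators $\tau(R)=1$ for $R$ among \eqref{Q1}--\eqref{Q4}, and finally evaluate those four relators explicitly.

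First I would dispose of \eqref{eq:tau1}. Applying \eqref{eq:rp} to the word $(ij)_a\,(ij)_1$ gives $\tau\!\left((ij)_a(ij)_1\right)=h_{ij}(a)\,h_{ji}(1)$, since the second letter is prefixed by the transposition $(ij)$. Hence \eqref{eq:tau1} reads $h_{ij}(a)=h_{ij}(a)\,h_{ji}(1)$ and is equivalent to $h_{ji}(1)=1$; as $i,j$ are arbitrary this is exactly \eqref{H0}. From here on I would work modulo \eqref{H0}, freely cancelling every factor $h_{pq}(1)$.

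Next I would treat the conjugated family \eqref{eq:tau2}. Writing $K$ as a word in the $(pq)_1$ with underlying permutation $\sigma=\pi(K)$, I would feed $KRK^{-1}$ into \eqref{eq:rp} and observe that every letter coming from $K$ or $K^{-1}$ carries subscript $1$, so the corresponding $\tau$-factors are of the form $h_{pq}(1)$ and vanish modulo \eqref{H0}; meanwhile each letter of $R$ now acquires the extra prefix $\sigma$, so that its index pair is shifted by $\sigma$ while its subscript is unchanged, where one uses that $R$ maps to the identity under $\pi$ to control the prefixes of the $K^{-1}$ block. This yields $\tau(KRK^{-1})\equiv{}^{\sigma}\tau(R)$ for the natural $S_n$-action permuting indices. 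Since applying $\sigma$ to any relator of \cref{prop:Q-pres} produces another relator of the same shape, the relation ${}^{\sigma}\tau(R)=1$ is already of the form $\tau(R')=1$, and so the whole family \eqref{eq:tau2} collapses to the relations $\tau(R)=1$ with $R$ ranging over \eqref{Q1}--\eqref{Q4}.

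It then remains to evaluate $\tau(R)$ on each of the four relators, written as positive words in the generators, by tracking how the running product of transpositions shifts indices. For \eqref{Q1} the relator $(ij)_a(ij)_a$ gives $h_{ij}(a)h_{ji}(a)=1$, i.e. \eqref{H1}; for \eqref{Q4} the relator $(ij)_a(ji)_{a^{-1}}$ gives $h_{ij}(a)h_{ij}(a^{-1})=1$, i.e. \eqref{H4}; for \eqref{Q2} the relator $(jk)_b(ij)_a(jk)_b(ik)_{ab}$ gives $h_{jk}(b)h_{ik}(a)h_{ij}(b)h_{ki}(ab)=1$, which becomes \eqref{H2} after inverting the last factor by \eqref{H1}; and for \eqref{Q3} the commutator relator gives $h_{ij}(a)h_{kl}(b)h_{ji}(a)h_{lk}(b)=1$, which is \eqref{H3} after \eqref{H1}. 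Assembling these with \eqref{H0} yields precisely the claimed presentation. I expect the main obstacle to be not any single one of these evaluations but the careful justification of the reduction $\tau(KRK^{-1})\equiv{}^{\sigma}\tau(R)$, since $\tau$ is only multiplicative on words representing elements of $HS_n(G)$ whereas $K$ itself does not lie in $HS_n(G)$, so one must argue directly from the explicit shape of \eqref{eq:rp} rather than by splitting $\tau$ across the product.
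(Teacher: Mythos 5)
Your proposal is correct and takes essentially the same route as the paper: derive \eqref{H0} from \eqref{eq:tau1}, reduce the conjugated family \eqref{eq:tau2} to the relations $\tau(R)=1$ via the equivalence $\tau(KRK^{-1})\equiv{}^{\sigma}\tau(R)$ modulo \eqref{H0} together with the $S_n$-invariance of the set of relators, and then evaluate $\tau$ on the relators \eqref{Q1}--\eqref{Q4}. Your explicit evaluations (including the use of \eqref{H1} to invert the trailing factors arising from \eqref{Q2} and \eqref{Q3}), and your justification of the reduction directly from the shape of \eqref{eq:rp} rather than by multiplicativity of $\tau$, match what the paper does, only written out in more detail.
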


In the next subsection we put the group $HS_n(G)$ into the context of Rehmann's extensions of type $\mathfrak{H}_n(G)$.
\subsection{Extensions of type $\mathfrak{H}_n$}
We start with a brief review of the material of \S~1--3 of~\cite{Reh78}.
For $n\geq 3$ denote by $D_n(G)$ the subgroup of $G^n$ consisting of vectors
$(g_1,\ldots, g_n)$ for which the product $g_1 g_2 \ldots g_n$ lies in the derived subgroup $[G, G]$.
Clearly, $D_n(G)$ is a normal subgroup of $G^n$ and the quotient $G^n/D_n(G)$ is isomorphic to $G^{\mathrm{ab}}\cong \HH_1(G, \ZZ)$.
The subgroup $D_n(G)$ is generated by elements $d_{ij}(g)$, where $d_{ij}(g)$ denotes the element of $G^n$ whose
 $i$-th component equals $g$, $j$-th component equals $g^{-1}$ and all other components are trivial.

The following definition is copied from \cite[\S~2]{Reh78}.
Consider the group $H_n(G)$ defined by generators $h_{ij}(u)$, $u\in G$, $i\neq j$ subject to the following relations:
\begin{align}
h_{ij}(u) h_{ji}(u)                = &\, 1,                        &                     \tag{H1} \label{RH1} \\
h_{ij}(u) h_{ki}(u) h_{jk}(u)      = &\, 1,                        &                     \tag{H2} \label{RH2} \\
h_{ij}(u) h_{ik}(v) h_{ij}(u)^{-1} = &\, h_{ik}(uv) h_{ik}(u)^{-1},& \text{for } j\neq k \tag{H3} \label{RH3} \\
h_{ij}(u) h_{kj}(v) h_{ij}(u)^{-1} = &\, h_{kj}(vu) h_{kj}(u)^{-1},& \text{for } i\neq k \tag{H4} \label{RH4} \\
[h_{ij}(u), h_{kl}(v)]             = &\, 1.                        &                     \tag{H5} \label{RH5} \end{align}
There is a surjective group homomorphism $H_n(G) \twoheadrightarrow D_n(G)$ sending $h_{ij}(u)$ to $d_{ij}(u)$.

By definition, {\it an extension of type $\mathfrak{H}_n(G)$} is an extension $H$ of $D_n(G)$ which is also a quotient of $H_n(G)$, i.\,e. 
 the one which fits into the following diagram.
\[ \xymatrix{ H_n(G) \ar@{->>}[rd] \ar@{->>}[d] & \\ H \ar@{->>}[r] & D_n(G)} \]

For $1\leq k\leq n$ denote by $\iota_k$ the map which sends an element $g$ of $[G, G]$ to $g[k]$ (the vector of $G^n$
 whose only nontrivial component equals $g$ and is located on $k$-th position).
Denote by $U_{H_n(G)}$ the pull-back in the following diagram:
\[ \xymatrix{ U_{H_n(G)} \pullbackcorner \ar@{^{(}->}[d] \ar@{->>}[r] & [G, G] \ar@{^{(}->}[d]^{\iota_k} \\ H_n(G) \ar@{->>}[r] & D_n(G)}\]
It turns out, that the group $U_{H_n(G)}$ is isomorphic to $G \mathbin{\widetilde{\otimes}} G$ for any $k$.
Here $G \mathbin{\widetilde{\otimes}} G$  denotes the nonabelian antisymmetric square of $G$ 
  (it is denoted by $U(G)$ in~\cite{Reh78} and by $(G, G)$ in~\cite{De76}).

A quotient of $G \mathbin{\widetilde{\otimes}} G$ is called an {\it extension of type $\mathfrak{U}(G)$} (cf.~\cite[\S~1]{Reh78}).
There is one-to-one correspondence between extensions of type $\mathfrak{H}_n(G)$ and those of type $\mathfrak{U}(G)$.
One obtains a $\mathfrak{U}(G)$-extension $U_H$ from a given $\mathfrak{H}_n(G)$-extension $H$ by taking pull-back as in the above diagram.
In other words, $U_H$ is the subgroup of $H$ generated by all the symbols $c_{kj}(u, v)$, where
\[c_{kj}(u,v)=h_{kj}(u)h_{kj}(v)h_{kj}(vu)^{-1}, \ \ 1\leq k \neq j \leq n,\ u,v \in G.\]
(It can be shown that symbols $c_{kj}(u, v)$ do not depend on $j$).
The inverse construction which produces an $\mathfrak{H}_n(G)$ extension from the given $\mathfrak{U}(G)$-extension and a number $n$ is described in detail in~\cite[\S~3]{Reh78}.

We now proceed with the proof of~\cref{thm:summary}.
Our main result is the following.
\begin{prop} \label{prop:simpler} 
For $n\geq 3$ relations \eqref{H0}--\eqref{H4} imply \eqref{RH1}--\eqref{RH5}.
Moreover, for $n\geq 4$ relations \eqref{H0}--\eqref{H3} are equivalent to \eqref{RH1}--\eqref{RH5}.
\end{prop}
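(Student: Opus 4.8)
The plan is to extract from~\eqref{H2} a single ``multiplicativity'' relation and let it do almost all of the work. Setting $a=1$ in~\eqref{H2} and using~\eqref{H0} gives
\[ h_{jk}(b)\,h_{ij}(b) = h_{ik}(b) \]
for any three distinct indices; call this $(\star)$, and use it in the form $h_{bc}(b')h_{ab}(b')=h_{ac}(b')$. Two of the Rehmann relations are free: \eqref{RH1} is literally~\eqref{H1} and~\eqref{RH5} is literally~\eqref{H3}.

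For the first assertion ($n\geq3$) I would verify the remaining Rehmann relations one at a time. Rearranging~\eqref{RH2} by~\eqref{H1} turns it into $h_{ki}(u)h_{jk}(u)=h_{ji}(u)$, an instance of $(\star)$. For~\eqref{RH4} I read off from~\eqref{H2} (after relabelling the indices) the identity $h_{kj}(vu)=h_{ij}(u)h_{kj}(v)h_{ki}(u)$, simplify $h_{ki}(u)h_{kj}(u)^{-1}=h_{ij}(u)^{-1}$ using $(\star)$ and~\eqref{H1}, and multiply out to obtain~\eqref{RH4}. The relation~\eqref{RH3} is the only one needing~\eqref{H4}: taking inverses in~\eqref{H2} and applying~\eqref{H4} produces the reversed form $h_{ij}(b)h_{ik}(a)h_{jk}(b)=h_{ik}(ba)$, whose $a=1$ case gives $h_{ij}(b)h_{jk}(b)=h_{ik}(b)$ and hence $h_{jk}(u)h_{ik}(u)^{-1}=h_{ij}(u)^{-1}$; combined with $h_{ik}(uv)=h_{ij}(u)h_{ik}(v)h_{jk}(u)$ this yields $h_{ik}(uv)h_{ik}(u)^{-1}=h_{ij}(u)h_{ik}(v)h_{ij}(u)^{-1}$, which is~\eqref{RH3}.

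For the equivalence when $n\geq4$ I would produce two opposite surjections. The implication \eqref{RH1}--\eqref{RH5}$\Rightarrow$\eqref{H0}--\eqref{H3} is short and already valid for $n\geq3$: \eqref{H1} and \eqref{H3} are~\eqref{RH1} and~\eqref{RH5}; \eqref{H0} follows by putting $u=v=1$ in~\eqref{RH3}; and \eqref{H2} is recovered from~\eqref{RH4} exactly as in the previous paragraph, once the analogue of $(\star)$ is obtained from~\eqref{RH2} and~\eqref{RH1}. For the converse, running the computations of the previous paragraph shows that \emph{every} Rehmann relation except~\eqref{RH3} already follows from \eqref{H0}--\eqref{H3} with no use of~\eqref{H4}. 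Thus the entire content of the forward direction is to derive~\eqref{RH3} from \eqref{H0}--\eqref{H3} alone.

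The hard part will be this last step, and it is exactly where the fourth index enters. Using $h_{ij}(u)=h_{ji}(u)^{-1}$ from~\eqref{H1} together with the conjugation formula for $h_{ik}$ by an $h_{\bullet i}$ read off from~\eqref{H2}, I can reduce~\eqref{RH3} to the single concrete identity
\[ [h_{ij}(u),\,h_{jk}(u)] = h_{ik}(u^2)\,h_{ik}(u)^{-2}, \]
call it $(\dagger)$; note that this commutator lies in the kernel of the map to $D_n(G)$ and is in general nontrivial, so it must be \emph{evaluated}, not merely killed. My strategy for $(\dagger)$ is to expand each generator through a fourth index $l\notin\{i,j,k\}$ and use~\eqref{H3} to annihilate all commutators of generators with disjoint index pairs; with four indices available the discrepancy between $(\star)$ and its reverse can be transported onto disjoint pairs and cancelled, whereas for three indices the value of the commutator is not pinned down. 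Once $(\dagger)$ is established,~\eqref{RH3} follows, giving the surjection opposite to the one above and hence the isomorphism of the two presentations for $n\geq4$. I expect the index bookkeeping in the proof of $(\dagger)$ to be the only genuinely delicate point.
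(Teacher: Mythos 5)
Your handling of the first assertion and of the direction \eqref{RH1}--\eqref{RH5} $\Rightarrow$ \eqref{H0}--\eqref{H3} is correct: the identity $(\star)$, the derivations of \eqref{RH2} and \eqref{RH4} from \eqref{H0}--\eqref{H2}, and the derivation of \eqref{RH3} from the \eqref{H4}-reversed form of \eqref{H2} all check out, and they in fact supply explicit computations for claims that the paper's own proof lists as evident bullet points. The genuine gap is in the one direction that carries the real content of the $n\geq 4$ statement: deriving \eqref{RH3} from \eqref{H0}--\eqref{H3} alone. Here your argument rests on two unproved claims: (i) that \eqref{RH3} reduces, via \eqref{H1} and \eqref{H2}, to the single identity $(\dagger)\colon [h_{ij}(u),h_{jk}(u)]=h_{ik}(u^2)h_{ik}(u)^{-2}$, and (ii) that $(\dagger)$ follows from \eqref{H0}--\eqref{H3} once a fourth index is available. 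You explicitly defer (ii) (``the index bookkeeping''), and you never exhibit the reduction (i), so the proposition is not proved. Moreover, (i) is implausible as stated: \eqref{RH3} is a two-parameter family of relations, while $(\dagger)$ is a one-parameter, purely diagonal identity. In terms of the symbols $c'_{ij}(u,v)=h_{ij}(u)h_{ij}(v)h_{ij}(uv)^{-1}$, which measure exactly the failure of \eqref{RH3}, the identity $(\dagger)$ unwinds (with the convention $[a,b]=aba^{-1}b^{-1}$, using only consequences of \eqref{H0}--\eqref{H2} you already have) to ${}^{h_{jk}(u)^{-1}}c'_{jk}(u,u)=c'_{ik}(u,u)$, a constraint on the diagonal symbols $c'(u,u)$ only, whereas \eqref{RH3} is equivalent, modulo the remaining relations, to the antisymmetry $c'_{ij}(u,v)^{-1}=c'_{ij}(v,u)$ for all $u,v\in G$. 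You offer no mechanism by which the diagonal case would propagate to arbitrary pairs $(u,v)$; and since your claimed reduction invokes only relations available already for $n=3$, where the implication is not expected to hold, it cannot be taken on faith.

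What is actually needed --- and what the paper does --- is a two-variable version of your commutator idea: in the group presented without \eqref{RH3} one proves $c'_{ij}(u,v)=[h_{ij}(u),h_{kj}(v)]$ for every auxiliary index $k\neq i,j$ (\cref{item-lem33-comm}); swapping the roles of $i$ and $k$ gives $c'_{ij}(u,v)=c'_{kj}(v,u)^{-1}$, and for $n\geq 4$ a second application with a fourth index $s\notin\{i,j,k\}$ yields $c'_{ij}(u,v)=c'_{sj}(v,u)^{-1}=c'_{kj}(u,v)$, i.e.\ independence of the first index for the full two-variable symbols. This independence is then converted into \eqref{RH3} through the chain of implications $(3)\Rightarrow(4)\Rightarrow(5)\Rightarrow(1)$ of the lemma following \cref{item-lem33} --- a step which itself requires proof and is absent from your sketch. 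Your instinct to ``transport the discrepancy onto disjoint index pairs'' is the right one, but to make it work you must run it for the symbols $c'_{ij}(u,v)$ with independent arguments $u,v$, not for the diagonal identity $(\dagger)$.
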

For $u, v\in G$ define the following symbol:
\[ c'_{ij}(u,v)=h_{ij}(u)h_{ij}(v)h_{ij}(uv)^{-1}.\]
Notice that~\eqref{RH4} implies $h_{ij}(1)=1$ therefore $c_{ij}(u, u^{-1}) = c'_{ij}(u, u^{-1})$.
Let us show that~\eqref{RH3} can be omitted from the definition of $H_n(G)$ provided $n \geq 4$.
\begin{lemma} \label{item-lem33} If one excludes relation~\eqref{RH3} from the list of defining relations for the group $H_n(G)$, $n\geq 3$,
 the following assertions still remain true:
 \begin{lemlist}
\item \label{item-lem33-cntr} The elements $c_{ij}(u, u^{-1})$ are central in $H_n(G)$;
\item \label{item-lem33-comm} One has $c'_{ij}(u, v) = [h_{ij}(u), h_{kj}(v)]$, $k\neq i, j$;
\item \label{item-lem33-conj}  One has ${}^{h_{ij}(w)}c_{kj}'(u, v) = c_{kj}'(u, w)^{-1} c_{kj}'(u, vw)$, $k\neq i, j$;
\item \label{item-lem33-conj2} One has ${}^{h_{ij}(w)} c'_{ij}(u, v) = c'_{ij}(uw, v) c'_{ij}(w, v)^{-1}$.
 \end{lemlist}
\end{lemma}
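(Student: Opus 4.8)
The plan is to make \ref{item-lem33-comm} the engine of the argument and to prove the four assertions in the order \ref{item-lem33-comm}, \ref{item-lem33-conj}, \ref{item-lem33-conj2}, \ref{item-lem33-cntr}. The point is that the only relation we have discarded is \eqref{RH3}, while the conjugation relation \eqref{RH4} and its index-swapped form $h_{kj}(v)h_{ij}(u)h_{kj}(v)^{-1}=h_{ij}(uv)h_{ij}(v)^{-1}$ both remain available. First I would expand the commutator $[h_{ij}(u),h_{kj}(v)]$ (choosing $k\neq i,j$, which is where $n\geq 3$ enters) and rewrite the inner conjugate by the swapped form of \eqref{RH4}; the factors telescope to $h_{ij}(u)h_{ij}(v)h_{ij}(uv)^{-1}=c'_{ij}(u,v)$, which is \ref{item-lem33-comm}. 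Carrying out the same computation with \eqref{RH4} in its original orientation instead yields the alternative expression $c'_{ij}(u,v)=h_{kj}(vu)h_{kj}(u)^{-1}h_{kj}(v)^{-1}$, written entirely in generators of fixed second index $j$. This ``$kj$-form'' is the real dividend of \ref{item-lem33-comm}: it is what allows conjugation by the ``diagonal'' generator $h_{ij}(w)$ to be computed without \eqref{RH3}.

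Parts \ref{item-lem33-conj} and \ref{item-lem33-conj2} then become bookkeeping. For \ref{item-lem33-conj} I would conjugate the defining word $h_{kj}(u)h_{kj}(v)h_{kj}(uv)^{-1}$ of $c'_{kj}(u,v)$ by $h_{ij}(w)$: each factor has second index $j$ and $i\neq k$, so \eqref{RH4} transforms all three, the resulting $h_{kj}(w)^{\pm 1}$ cancel in adjacent pairs, and the remainder regroups as $c'_{kj}(u,w)^{-1}c'_{kj}(u,vw)$. For \ref{item-lem33-conj2} I would conjugate, not the defining word, but the $kj$-form of $c'_{ij}(u,v)$: again all three factors carry second index $j$, \eqref{RH4} applies to each, and the analogous cancellation gives $c'_{ij}(uw,v)c'_{ij}(w,v)^{-1}$.

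For \ref{item-lem33-cntr} I would use $c_{ij}(u,u^{-1})=c'_{ij}(u,u^{-1})$ together with the transport identity obtained from the $kj$-form and \eqref{RH1}, namely $c_{ij}(u,u^{-1})=h_{kj}(u)^{-1}h_{kj}(u^{-1})^{-1}=h_{jk}(u)h_{jk}(u^{-1})=c_{jk}(u,u^{-1})$, so that the symbol can be moved onto any admissible pair. Centrality is then the statement that each generator $h_{ab}(w)$ fixes $c_{ij}(u,u^{-1})$ under conjugation, which I would check by cases according to how $\{a,b\}$ meets $\{i,j\}$: a disjoint pair commutes by \eqref{RH5}; a generator sharing the second index is handled by setting $v=u^{-1}$ in \ref{item-lem33-conj}; a generator sharing the first index is reduced to the previous case after transporting the symbol to a pair on which that generator acts through its second index; and the diagonal generator $h_{ij}(w)$ is handled by setting $v=u^{-1}$ in \ref{item-lem33-conj2}.

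The main obstacle will be the last two subcases, since after specialization the formulas of \ref{item-lem33-conj} and \ref{item-lem33-conj2} read $c'_{ij}(u,w)^{-1}c'_{ij}(u,u^{-1}w)$ and $c'_{ij}(uw,u^{-1})c'_{ij}(w,u^{-1})^{-1}$, neither of which is visibly $c'_{ij}(u,u^{-1})$. I expect to collapse them using two auxiliary identities: first ${}^{h_{ij}(u)}c'_{ij}(u^{-1},w)=c'_{ij}(u,w)^{-1}$, which I would prove by conjugating the $kj$-form by $h_{ij}(u)$ via \eqref{RH4} and simplifying with $h_{kj}(1)=1$; and second the degenerate instance of \ref{item-lem33-conj2} on the auxiliary pair, where $u\cdot u^{-1}=1$ and $c'_{kj}(1,w)=1$ force the desired cancellation. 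Throughout these collapses the delicate point is to verify that no step secretly reintroduces \eqref{RH3}; the whole scheme is designed so that every conjugation is reduced, via the $kj$-form and the transport identity, to an instance of \eqref{RH4}, which is precisely the relation that survives.
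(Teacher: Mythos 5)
Your proposal is correct, but it reaches the centrality assertion \cref{item-lem33-cntr} by a genuinely different route than the paper. In the paper, \cref{item-lem33-cntr} is proved first and independently of the other assertions, by conjugating in the \emph{opposite} direction: one computes ${}^{c_{ij}(u,u^{-1})}h_{kj}(v)$ directly, and because the two arguments of the symbol are mutually inverse, the two applications of \eqref{RH4} telescope at once back to $h_{kj}(v)$; the remaining generators are then covered by the commutator--inverse trick together with \eqref{RH1}, \eqref{RH5}, and \eqref{RH2}, the last of which expresses $h_{ij}(v)$ through $h_{jk}(v)$ and $h_{ki}(v)$. Assertions \cref{item-lem33-comm} and \cref{item-lem33-conj} are handled exactly as you do (the paper calls them straightforward corollaries of \eqref{RH4}), and the paper's derivation of \cref{item-lem33-conj2} from \cref{item-lem33-conj} via $c'_{ij}(u,v)=c'_{kj}(v,u)^{-1}$ is essentially your conjugation of the $kj$-form. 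The real divergence is that you obtain \cref{item-lem33-cntr} as a formal consequence of \cref{item-lem33-conj} and \cref{item-lem33-conj2}, conjugating the symbol by each type of generator and collapsing. I checked that your two auxiliary identities do suffice: for a generator $h_{aj}(w)$ sharing the second index, substituting $c'_{ij}(u,w)^{-1}={}^{h_{ij}(u)}c'_{ij}(u^{-1},w)$ into $c'_{ij}(u,w)^{-1}c'_{ij}(u,u^{-1}w)$ reduces the collapse to free cancellation among the $h_{ij}$'s, leaving $h_{ij}(u)h_{ij}(u^{-1})$; for the diagonal generator, writing the symbol in the $kj$-form, pushing the conjugation through \eqref{RH4}, and applying the degenerate instance ${}^{h_{kj}(u^{-1})}c'_{kj}(u,w)=c'_{kj}(u^{-1},w)^{-1}$ of \cref{item-lem33-conj2} collapses the expression as you predicted. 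This step is genuinely delicate and is the right thing to have worried about: the bare specializations $v=u^{-1}$ of \cref{item-lem33-conj} and \cref{item-lem33-conj2} merely trade the centrality statement for the pair $(i,j)$ against the analogous statement for a permuted pair (the two subcases reduce to each other), so the degenerate identities are precisely what breaks that circle. As for what each route buys: the paper's computation is shorter and self-contained, while yours is longer but exhibits centrality as a corollary of the conjugation formulas, and --- since your transport identity $c_{ij}(u,u^{-1})=c_{jk}(u,u^{-1})=c_{ki}(u,u^{-1})$ replaces the paper's appeal to \eqref{RH2} --- it in fact proves the whole lemma without using \eqref{RH2} at all.
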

\begin{proof}
First one shows using~\eqref{RH4} that $c_{ij}(u, u^{-1})$ centralizes $h_{kj}(v)$ (cf. with the proof of~\cite[Lemma~2.1(2)]{Reh78}).
Since in any group $[a, b]=1$ implies $[a^{-1}, b] = [a, b^{-1}] = [a^{-1}, b^{-1}] = 1$ we get that $c_{ij}(u, u^{-1})$ also centralizes $h_{kj}(v)^{-1} = h_{jk}(v)$ and
 $c_{ij}(u^{-1}, u)^{-1} = c_{ji}(u, u^{-1})$ centralizes both $h_{kj}(v)$ and $h_{jk}(v)$.
Together with~\eqref{RH2} and~\eqref{RH5} this implies that $c_{ij}(u, u^{-1})$ centralizes all the generators of $H_n(G)$ and hence lies in the center of $H_n(G)$.

The second and the third assertions are straightforward corollaries of~\eqref{RH4}.
The fourth assertion follows from the third one:
\[{}^{h_{ij}(w)} c'_{ij}(u, v) = {}^{h_{ij}(w)} c'_{kj}(v, u)^{-1} = (c'_{kj}(v, w)^{-1} c'_{kj}(v, uw))^{-1} = c'_{ij}(uw, v) c'_{ij}(w, v)^{-1}. \qedhere\]
\end{proof}

\begin{lemma}
If one excludes relation~\eqref{RH3} from the list of defining relations for the group $H_n(G)$, $n\geq 3$, the following statements are equivalent:
\begin{enumerate}
\item \label{item1} \eqref{RH3} holds;
\item \label{item2} one has $c'_{ij}(u, v)^{-1} = c'_{ij}(v, u)$;
\item \label{item3} symbols $c'_{ij}(u, v)$ do not depend on $i$;
\item \label{item4} one has $c'_{ij}(u, vw) = c'_{ij}(wu, v) \cdot c'_{ij}(uv, w)$;
\item \label{item5} one has $c'_{ij}(u, vu^{-1}) = c'_{ij}(uv, u^{-1})$.
\end{enumerate}
\end{lemma}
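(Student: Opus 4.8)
The plan is to establish the five equivalences through implications built around the relation~\eqref{RH3}, after first recording the identities that already hold in $H_n(G)$ \emph{without}~\eqref{RH3}. From~\eqref{RH4} one gets $h_{ij}(1)=1$, hence the normalizations $c'_{ij}(u,1)=c'_{ij}(1,v)=1$. From \cref{item-lem33-comm} one has $c'_{ij}(u,v)=[h_{ij}(u),h_{kj}(v)]$ for every $k\neq i,j$; reapplying this to $c'_{kj}(v,u)$ (with third index $i$) and using $[x,y]^{-1}=[y,x]$ yields the \emph{antisymmetry identity}
\[ (\star)\qquad c'_{ij}(u,v)=c'_{kj}(v,u)^{-1}\qquad(k\neq i,\,j), \]
valid for all $n\geq 3$. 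I also keep at hand the conjugation formulas \cref{item-lem33-conj,item-lem33-conj2} and the tautological (nonabelian) cocycle identity
\[ {}^{h_{ij}(u)}c'_{ij}(v,w)\cdot c'_{ij}(u,vw)=c'_{ij}(u,v)\,c'_{ij}(uv,w), \]
obtained by expanding $h_{ij}(u)h_{ij}(v)h_{ij}(w)$ in two ways.

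Several links are then cheap. Equivalence (\ref{item2})$\,\Leftrightarrow\,$(\ref{item3}) is read off $(\star)$: statement~(\ref{item2}) says $c'_{ij}(v,u)=c'_{ij}(u,v)^{-1}$, while $(\star)$ says $c'_{kj}(v,u)=c'_{ij}(u,v)^{-1}$, so both hold exactly when $c'_{ij}(v,u)=c'_{kj}(v,u)$ for all admissible $i,k$ — the independence of the first index asserted in~(\ref{item3}). Implication (\ref{item4})$\,\Rightarrow\,$(\ref{item2}) is the specialization $u=1$ of~(\ref{item4}), using $c'_{ij}(1,\cdot)=1$ to turn it into $c'_{ij}(w,v)c'_{ij}(v,w)=1$; and (\ref{item4})$\,\Rightarrow\,$(\ref{item5}) is the specialization $w=u^{-1}$, using $c'_{ij}(1,v)=1$ to delete the factor $c'_{ij}(u^{-1}u,v)$. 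Thus from~(\ref{item4}) the three remaining intrinsic statements drop out immediately.

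What remains — and what carries the real content — is to fold~\eqref{RH3} into this circle. The governing observation is that~\eqref{RH3} is precisely the relation upgrading the symbols from being central only at $v=u^{-1}$ (\cref{item-lem33-cntr}) to being central for \emph{all} arguments: I will show (\ref{item1})$\,\Leftrightarrow\,$\emph{centrality of every $c'_{ij}(u,v)$}, by rewriting the left side of~\eqref{RH3} with~\eqref{RH1} as a conjugation inside the single column $i$ and comparing it with the value forced by~\eqref{RH4}. Once centrality is available, the conjugation formulas \cref{item-lem33-conj,item-lem33-conj2} collapse to honest multiplicativity of $c'_{ij}$ in each variable, and substituting this into the cocycle identity delivers~(\ref{item4}); conversely, each of the weaker-looking identities~(\ref{item2}) and~(\ref{item5}) can be pushed, through $(\star)$ and the cocycle identity, back to that same centrality. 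This yields forward $(\ref{item1})\Rightarrow(\ref{item4})\Rightarrow(\ref{item5})$ and $(\ref{item4})\Rightarrow(\ref{item2})\Leftrightarrow(\ref{item3})$, while the backward implications $(\ref{item2})\Rightarrow(\ref{item1})$ and $(\ref{item5})\Rightarrow(\ref{item1})$ (hence also $(\ref{item3}),(\ref{item4})\Rightarrow(\ref{item1})$) complete the equivalence.

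The main obstacle is exactly this translation between the \emph{relation}~\eqref{RH3} and the \emph{intrinsic} symbol identities. The relation couples two distinct column indices $j,k$ through a single row $i$, whereas each $c'_{ij}$ lives in one fixed row–column pair; reconciling the two forces repeated use of~\eqref{RH1} to trade $h_{ij}$ for $h_{ji}^{-1}$, and careful tracking of multiplication order, since the two conjugation formulas \cref{item-lem33-conj,item-lem33-conj2} are of opposite handedness and the gap between the symbols $c_{ij}$ and $c'_{ij}$ is exactly the discrepancy between $vu$ and $uv$, which must be absorbed using the centrality of $c_{ij}(u,u^{-1})=c'_{ij}(u,u^{-1})$ from \cref{item-lem33-cntr}. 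I expect essentially all of the routine-but-delicate computation to reside in the \emph{backward} reconstructions — extracting full centrality from~(\ref{item2}),~(\ref{item3}),~(\ref{item5}) — while the forward specializations and the manipulations internal to~(\ref{item2})--(\ref{item5}) are comparatively mechanical. As a byproduct, for $n\geq 4$ one may choose the index $k$ in $(\star)$ different from both first indices being compared, so~(\ref{item3}) holds unconditionally and hence so do all five statements; this is the mechanism behind the redundancy of~\eqref{RH3} recorded in~\cref{prop:simpler}.
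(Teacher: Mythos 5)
Your preliminary reductions are sound and verifiable: the antisymmetry identity $(\star)$, i.e.\ $c'_{ij}(u,v)=c'_{kj}(v,u)^{-1}$, does follow from \cref{item-lem33-comm}; it yields (\ref{item2})$\Leftrightarrow$(\ref{item3}) exactly as you argue; the specializations $u=1$ and $w=u^{-1}$ give (\ref{item4})$\Rightarrow$(\ref{item2}) and (\ref{item4})$\Rightarrow$(\ref{item5}); and your ``cocycle identity'' is a correct tautology. But none of these steps touches statement (\ref{item1}), and everything you propose for connecting (\ref{item1}) to the other four rests on a claim that is false.

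The false claim is your ``governing observation'' that \eqref{RH3} is equivalent to centrality of \emph{all} symbols $c'_{ij}(u,v)$. Under the surjection $H_n(G)\twoheadrightarrow D_n(G)$, $h_{ij}(u)\mapsto d_{ij}(u)$, the symbol $c'_{ij}(u,v)=h_{ij}(u)h_{ij}(v)h_{ij}(uv)^{-1}$ maps to the vector whose only nontrivial component is $u^{-1}v^{-1}uv$ in position $j$; such a vector is central in $D_n(G)$ only if $u^{-1}v^{-1}uv$ is central in $G$. Hence for any $G$ with $[G,G]\not\subseteq Z(G)$ (say $G=S_3$), the symbols $c'_{ij}(u,v)$ are non-central even in the full group $H_n(G)$, where \eqref{RH3} and therefore all of (\ref{item1})--(\ref{item5}) hold. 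This is precisely why \cref{item-lem33-cntr} asserts centrality only of the special symbols $c_{ij}(u,u^{-1})$: those lie in the kernel of $H_n(G)\to D_n(G)$, while general symbols do not. Concretely, for $n\geq 4$ statement (\ref{item3}) holds unconditionally (as you note), so by the lemma the group without \eqref{RH3} already equals $H_n(G)$; taking $G=S_3$ then simultaneously refutes your implications (\ref{item1})$\Rightarrow$centrality, (\ref{item2})$\Rightarrow$centrality and (\ref{item5})$\Rightarrow$centrality. So your circle never closes: you validly have (\ref{item2})$\Leftrightarrow$(\ref{item3}) and (\ref{item4})$\Rightarrow$(\ref{item2}),(\ref{item5}), but no correct implication into or out of (\ref{item1}), which is the actual content of the lemma. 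The paper closes the circle by a computation at the level of \emph{generators}, not symbols: using \eqref{RH1}, \eqref{RH4} and \cref{item-lem33-cntr} one shows that ${}^{h_{ij}(u)}h_{ik}(v)=h_{ik}(u^{-1})^{-1}h_{ik}(vu^{-1})$ holds already without \eqref{RH3}, so \eqref{RH3} is equivalent to $h_{ik}(uv)h_{ik}(u)^{-1}=h_{ik}(u^{-1})^{-1}h_{ik}(vu^{-1})$, i.e.\ to $c'_{ik}(u^{-1},uv)=c'_{ik}(vu^{-1},u)$, which is a reformulation of (\ref{item5}); the chain (\ref{item1})$\Rightarrow$(\ref{item2})$\Rightarrow$(\ref{item3})$\Rightarrow$(\ref{item4}) is then handled by Rehmann's arguments. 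Your instinct to rewrite the left side of \eqref{RH3} via \eqref{RH1} and compare with what \eqref{RH4} forces is exactly the right move --- but its correct outcome is identity (\ref{item5}), not centrality.
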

\begin{proof}
Implications $(1) \implies (2) \implies (3) \implies (4)$ can proved by essentially the same argument as 
 the corresponding statements for $c_{ij}(u, v)$ contained in~\cite[Lemmas~2.1-2.2]{Reh78}.
Implication $(4) \implies (5)$ is trivial.

We now prove $(5) \implies (1)$. Notice that~\eqref{RH4} implies ${}^{h_{ij}(u)^{-1}}h_{ik}(v) = h_{ik}(u)^{-1} h_{ik}(vu)$, therefore
using~\cref{item-lem33-cntr} we get that ${}^{h_{ij}(u)}h_{ik}(v) = {}^{c'_{ij}(u, u^{-1}) h_{ij}(u^{-1})^{-1}}h_{ik}(v) = h_{ik}(u^{-1})^{-1} h_{ik}(vu^{-1}).$
Thus, \eqref{RH3} is equivalent to the equality 
\[ h_{ik}(uv) h_{ik}(u)^{-1} = h_{ik}(u^{-1})^{-1} h_{ik}(vu^{-1}),\]
or what is the same
\[c'_{ik}(u^{-1}, uv) = h_{ik}(u^{-1}) h_{ik}(uv) =  h_{ik}(vu^{-1}) h_{ik}(u) = c'_{ik}(vu^{-1}, u).\]
It is clear that the last equality is an equivalent reformulation of $(5)$.
\end{proof}

\begin{proof}[Proof of~\cref{prop:simpler}]
Notice first that for $n\geq 4$ the third statement of the previous lemma is automatically true.
Indeed, from~\cref{item-lem33-comm} it follows that
\[ c'_{ij}(u, v) = c'_{sj}(v, u)^{-1} = c'_{kj}(u, v), \text{ where } s\neq i,j,k. \]
Thus, \eqref{RH3} is superfluous for $n\geq 4$.
To prove the proposition it remains to notice the following:
\begin{itemize}
 \item \eqref{RH2} and \eqref{RH4} follow from \eqref{H0}--\eqref{H2};
 \item \eqref{H0} follows from \eqref{RH4};
 \item \eqref{H2} follows from \eqref{RH1}, \eqref{RH2} and \eqref{RH4};
 \item \eqref{RH4} and \eqref{RH3} are equivalent in the presence of \eqref{H4}; 
\end{itemize}
\end{proof}

\begin{cor} \label{cor:main} For $n \geq 3$ the group $HS_n(G)$ is the quotient of the group $H_n(G)$ by~\eqref{H4} (in particular, 
 it is an extension of type $\mathfrak{H}_n(G)$).
Moreover, there is an isomorphism, which is natural in $G$:
\begin{equation} \Ker(S_n(G) \to G \wr S_n) = \Ker(HS_n(G) \to D_n(G)) \cong \HH_2(G, \ZZ). \end{equation} \end{cor}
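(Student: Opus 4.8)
The plan is to break \cref{cor:main} into three parts: the identification of $HS_n(G)$ as a quotient of $H_n(G)$, the first (literal) equality of kernels, and the isomorphism with $\HH_2(G,\ZZ)$.

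First I would show that $HS_n(G)$ is the quotient of $H_n(G)$ by \eqref{H4}. By \cref{prop:HSpres} the group $HS_n(G)$ is presented by \eqref{H0}--\eqref{H4}, so it suffices to check that \eqref{H0}--\eqref{H4} and the list \eqref{RH1}--\eqref{RH5} together with \eqref{H4} generate the same normal subgroup of the free group on the symbols $h_{ij}(a)$. One inclusion is exactly \cref{prop:simpler} (relations \eqref{H0}--\eqref{H4} imply \eqref{RH1}--\eqref{RH5}); for the reverse inclusion I would assemble the bullet points from the proof of \cref{prop:simpler}: \eqref{H0} follows from \eqref{RH4}, the relations \eqref{H1} and \eqref{H3} coincide with \eqref{RH1} and \eqref{RH5}, \eqref{H2} follows from \eqref{RH1}, \eqref{RH2} and \eqref{RH4}, while \eqref{H4} is assumed. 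To see that the quotient is an extension of type $\mathfrak{H}_n(G)$ I would check that the surjection $H_n(G)\twoheadrightarrow D_n(G)$, $h_{ij}(a)\mapsto d_{ij}(a)$, descends modulo \eqref{H4}: the image of \eqref{H4} is the identity $d_{ij}(a)^{-1}=d_{ij}(a^{-1})$, which holds in $D_n(G)$ since inversion in $G^n$ is componentwise.

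Next I would prove $\Ker(S_n(G)\to G\wr S_n)=\Ker(HS_n(G)\to D_n(G))$ by computing $\mu$ on the generators $h_{ij}(a)=(ij)_a\cdot(ij)_1$. Using $\mu(s_g)=(g^{-1}g^{(s^{-1})},s)$ and the identification $(ij)_a\mapsto (ij)_{a[j]}$ from \cref{prop:Q-pres}, one finds $\mu((ij)_a)=(d_{ij}(a),(ij))$ and $\mu((ij)_1)=(1,(ij))$, whence $\mu(h_{ij}(a))=(d_{ij}(a),1)$. Thus $\mu$ sends $HS_n(G)$ into the subgroup $G^n$ of $G\wr S_n$ and restricts there to the extension map $h_{ij}(a)\mapsto d_{ij}(a)$, with image $D_n(G)$. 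Since $\pi=\pi_{S_n}\circ\mu$ by \eqref{eq:ex-seq}, every element of $\Ker(\mu)$ already lies in $\Ker(\pi)=HS_n(G)$, so $\Ker(\mu)$ is exactly the kernel of the restriction $HS_n(G)\to D_n(G)$, yielding the asserted equality of subgroups.

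The substantive part is the isomorphism $\Ker(HS_n(G)\to D_n(G))\cong\HH_2(G,\ZZ)$, for which I would pass to the associated extension of type $\mathfrak{U}(G)$. A short calculation gives $c_{ij}(a,a^{-1})=h_{ij}(a)h_{ij}(a^{-1})$ (using $h_{ij}(1)=1$), so relation \eqref{H4} is equivalent to imposing $c_{ij}(a,a^{-1})=1$; by \cref{item-lem33-cntr} these elements are central, so $HS_n(G)=H_n(G)/\langle c_{ij}(a,a^{-1})\rangle$ is a central quotient. Through the pullback square defining $U_H$ the group $\Ker(HS_n(G)\to D_n(G))$ is identified with $\Ker(U_{HS_n(G)}\to[G,G])$, where $U_{HS_n(G)}=(G\mathbin{\widetilde{\otimes}}G)/\langle c_{ij}(a,a^{-1})\rangle$.

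The final step, which I expect to be the main obstacle, is to recognize this quotient of the antisymmetric square by its diagonal symbols as the nonabelian exterior square $G\wedge G$, and then to invoke the classical isomorphism $\Ker(G\wedge G\to[G,G])\cong\HH_2(G,\ZZ)$; naturality in $G$ is then automatic since every construction is functorial. The delicate point is matching Rehmann's symbols $c_{ij}(u,v)$ with the generators of $G\mathbin{\widetilde{\otimes}}G$ precisely enough to see that killing $c_{ij}(a,a^{-1})$ is the same as killing the diagonal elements $a\otimes a$ of the square; this rests on the bilinearity relations of the tensor construction (relating $a\otimes a^{-1}$ to $a\otimes a$) together with the fact that, in the antisymmetric setting, the diagonal elements have order dividing two, so that their removal converts $G\mathbin{\widetilde{\otimes}}G$ into $G\wedge G$.
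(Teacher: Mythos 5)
Your proposal is correct and follows essentially the same route as the paper, which compresses the argument into two sentences: imposing \eqref{H4} amounts to killing the central symbols $c_{ij}(u,u^{-1})$ (citing Rehmann, p.~87), and the identification of the kernel with $\HH_2(G,\ZZ)$ then follows from \cref{prop:simpler} together with the classical result you call ``the main obstacle,'' which is exactly the cited \cite[Proposition~5]{De76}. Your expansions (the explicit computation $\mu(h_{ij}(a))=(d_{ij}(a),1)$ giving the equality of kernels, and the reassembly of the bullet points from \cref{prop:simpler} for the presentation claim) are accurate fillings-in of steps the paper leaves implicit.
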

\begin{proof}
Notice that imposing~\eqref{H4} has the same effect as modding out by all symbols $c_{ij}(u, u) = c_{ij}(u, u^{-1})$ (cf.~\cite[p.~87]{Reh78}).
The corollary and \cref{thm:summary} now follow from the above proposition and~\cite[Proposition~5]{De76}.
\end{proof}
Notice that the extension of type $\mathfrak{U}(G)$ corresponding to $HS_n(G)$ is isomorphic to the exterior square $G\wedge G$. 

\section{Proof of Theorem~2} \label{sec:main}
Recall that for arbitrary set $X$ one defines the space $EX$ as the the simplicial set whose set of $k$-simplices $EX_k$
 is $X^{k+1}$ and whose faces and degeneracies are obtained by omitting and repeating components. 
For a group $G$ we denote by $\pi_G$ the canonical map $EG \to BG$ sending $(g, h) \in EG_1$ to $g^{-1}h \in BG_1$.
 
Now let $N$ be a group acting on $X$. We define two simplicial sets $U$ and $V$ as follows:
\[ U = \bigcup\limits_{n\in N} E(\Gamma_{n.-}) \subseteq E(X\times X),\ \ V = \bigcup\limits_{x,y\in X}E(N(x\to y)) \subseteq EN. \]
Here $\Gamma_{n.-}$ is the graph of the function $(x \mapsto nx)$ and $N(x\to y)$ denotes the subset of elements $n\in N$ satisfying $nx=y$.
With this notation the subset $N(x\to x)$ coincides with the stabilizer subgroup $N_x \leq N$.

\begin{lemma} \label{lm:quillen-a} The simplicial sets $U$ and $V$ are homotopy equivalent. \end{lemma}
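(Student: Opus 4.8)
The plan is to realize both $U$ and $V$ as the two edgewise contractions of a single auxiliary bisimplicial set and then invoke the bisimplicial realization lemma (the simplicial incarnation of Quillen's Theorem~A). Concretely, I would define a bisimplicial set $W$ by
\[ W_{p,q} = \{\, (n_0, \ldots, n_p, x_0, \ldots, x_q) \in N^{p+1}\times X^{q+1} : n_i x_j = n_{i'} x_j \text{ for all } i, i', j \,\}, \]
where the $p$-direction faces and degeneracies omit and repeat the $n_i$, and the $q$-direction ones omit and repeat the $x_j$. The defining condition says that the common value $n_i x_j$, which I will call $y_j$, depends only on $j$; this is preserved by all four families of operations, so $W$ is genuinely bisimplicial, and each row and column will turn out to be an $E(\cdot)$ of a set (which is where contractibility comes from).

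Next I would write down two projections of bisimplicial sets. Forgetting the $x$'s gives $W_{p,q}\to V_p$, $(n_0,\ldots,n_p,x_0,\ldots,x_q)\mapsto (n_0,\ldots,n_p)$; this lands in $V_p$ because $x_0$ witnesses the defining condition of $V$, and it is constant in $q$. Recording instead the common value gives $W_{p,q}\to U_q$, $(n_0,\ldots,n_p,x_0,\ldots,x_q)\mapsto ((x_0,n_0x_0),\ldots,(x_q,n_0x_q))$; this lands in $U_q$ with witness $n_0$, it is independent of the chosen representative since $n_i x_j = n_0 x_j$, and it is constant in $p$. Thus $W$ maps to the bisimplicial sets that are constant equal to $V$ in one direction and to $U$ in the other, whose diagonals are $V$ and $U$ respectively.

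The heart of the argument is to check that each projection is a levelwise weak equivalence. Fixing $p$, the fibre of $W_{p,\bullet}\to V_p$ over $(n_0,\ldots,n_p)$ is $E(\{x\in X : n_i x = n_{i'} x \ \forall i,i'\})$, which is nonempty exactly when the tuple lies in $V_p$, hence contractible. Fixing $q$, the fibre of $W_{\bullet,q}\to U_q$ over $((x_j,y_j))_j$ is $E\bigl(\bigcap_j N(x_j\to y_j)\bigr)$, again a nonempty $E(\cdot)$ and so contractible. Since in each case the target is discrete in the relevant direction, both projections are levelwise weak equivalences, so the realization lemma yields weak equivalences $\operatorname{diag}W\to U$ and $\operatorname{diag}W\to V$. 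The zig-zag $U\simeq \operatorname{diag}W\simeq V$ then gives the claim; as the geometric realizations are CW complexes, this weak equivalence is a genuine homotopy equivalence.

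The main obstacle is conceptual rather than computational: finding the correct correspondence $W$ so that one edgewise contraction records the ``$\exists\,n$, varying $x$'' description of $U$ while the other records the ``$\exists\,x$, varying $n$'' description of $V$. Once $W$ is fixed, the only delicate points are that the two projections are well defined as maps of bisimplicial sets — in particular that $y_j = n_0 x_j$ is independent of the chosen representative $n_i$ — and the identification of the fibres with contractible $E(\cdot)$'s; both are routine. It is worth stressing that there is \emph{no} direct simplicial map $U\to V$ or $V\to U$, which is precisely why the intermediate bisimplicial object is needed.
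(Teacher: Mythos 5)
Your proposal is correct, and it is built around the same intermediate object as the paper: your bisimplicial $W$ has as its diagonal exactly the simplicial set $W$ of ``compatible matrices'' $\left(\begin{smallmatrix}x_0 & \ldots & x_k\\ n_0 & \ldots & n_k\end{smallmatrix}\right)$ that the paper introduces, and your two projections agree with the paper's maps $f\colon W\to U$ and $g\colon W\to V$. Where you genuinely diverge is in the device used to show these comparison maps are weak equivalences. The paper treats $W$ as a single simplicial set and invokes Quillen's Theorem A for simplicial maps: for each simplex $d\colon \Delta^p\to V$ it must show the pullback $g/(p,d)\subseteq \Delta^p\times EX$ is contractible, which it does by choosing a point $\widetilde{x}\in X_d$ and writing down an explicit simplicial contracting homotopy. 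You instead keep the two directions separate and invoke the realization lemma for bisimplicial sets (levelwise weak equivalence implies weak equivalence on diagonals); the levelwise check then becomes trivial, since each level splits as a disjoint union, over a discrete base, of simplicial sets of the form $E(\text{nonempty set})$, which are contractible for free. Your route avoids the paper's explicit homotopy entirely and makes the ``contractible fibres'' intuition literal, at the cost of requiring the bisimplicial machinery; the paper's route stays within single simplicial sets and a citable form of Theorem A, at the cost of a hands-on contraction. (One terminological quibble: the realization lemma is not the same statement as the simplicial Theorem A the paper cites, though the two are close relatives --- Theorem A is standardly proved from it --- so you should cite it as such, e.g. Goerss--Jardine IV.1.7, rather than calling it an incarnation of Theorem A.) Your closing observations --- that there is no direct map between $U$ and $V$, and that Whitehead's theorem upgrades the zig-zag of weak equivalences to a homotopy equivalence of realizations --- are both accurate and match the paper's implicit reading of the statement.
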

\begin{proof} First, we define yet another simplicial set $W$ as follows.
Its $k$-simplices $W_k$ are matrices $\left(\begin{smallmatrix}x_0 & x_1 & \ldots & x_k&\\ n_0 & n_1 & \ldots & n_k \end{smallmatrix}\right)$,
 where $x_i\in X$ are $n_i\in N$ are such that all $n_i$'s act on each $x_j$ in the same way, i.\,e. $n_ix_j = n_{i'} x_j$ for $0\leq i,i',j\leq k$. 
 The faces and degeneracies of $W$ are the maps of omission and repetition of columns.
 
 Now there are two simplicial maps $f\colon W\to U$, $g\colon W\to V$ whose action on $0$-simplicies is given by 
  $f\left(\begin{smallmatrix}x_0 \\ n_0\end{smallmatrix}\right) = (x_0, n_0x_0)$, 
  $g\left(\begin{smallmatrix}x_0 \\ n_0\end{smallmatrix}\right) = n_0$. 
 To prove the lemma it suffices to show that $f$ and $g$ are homotopy equivalences. 
 The proof for $f$ and $g$ is similar, let us show, for example, that $g$ is a homotopy equivalence.
 
 In view of Quillen theorem A (cf.\cite[ex.~IV.3.11]{Kbook}) it suffices to show that for each $p$-simplex $d \colon \Delta^p \to V$ the 
  pullback $g/(p, d)$ of $d$ and $g$ is contractible.
 The simplicial set $g/(p, d)$ can be interpreted as the subset of $\Delta^p \times EX$ whose set of $k$-simplices consists of pairs
  $(\alpha\colon \underline{k}\to \underline{p}, (x_0, \ldots, x_k)\in X(\alpha, d)^{k+1})$.
 Here $X(\alpha, d)$ is the subset of $X$ consisting of all $x$ for which $d_{\alpha(i)}x = d_{\alpha(j)}x$ for $0\leq i,j\leq k$.
 Notice that the set $X_d := X(id_{\underline{p}}, d)$ is nonempty and is contained in every $X(\alpha, d)$ (it even equals $X(\alpha, d)$ for surjective $\alpha$).
 Now choose a point $\widetilde{x}\in X_d$ and consider the simplicial homotopy \[H\colon \Delta^p \times EX \times \Delta^1 \to \Delta^p\times EX\] 
  between the identity map of $\Delta^p \times EX$ and
 the map $\Delta^p \times c_{\widetilde{x}}$, where $c_{\widetilde{x}}$ is the constant map. 
 More concretely, $H$ sends each triple $(\alpha\colon \underline{k} \to \underline{p}, (x_0, \ldots, x_k), \beta\colon \underline{k}\to\underline{1})$
 to $(\alpha, (x_0, \ldots, x_{i-1}, \widetilde{x}, \ldots, \widetilde{x}))$, where $i$ is the minimal number such that $\beta(i)=1$.
 By the choice of $\widetilde{x}$ the image of $H$ restricted to $g/(p, d)\times \Delta^1$ is contained in $g/(p, d)$, hence $g/(p, d)$ is contractible. 
\end{proof}

Now suppose that $X=H$ is also a group upon which $N$ acts on the left.
\begin{cor} \label{cor:ker-iso}
Consider the following two simplicial sets:
\[ S = \bigcup\limits_{h\in H} BN_h \subseteq BN,\ \ \ T = \pi_N(U) \subseteq B(H \times H).\]
There is an isomorphism $\theta\colon \Ker(\pi_1(S) \to N) \cong \Ker(\pi_1(T) \to H \times H).$
Moreover, the higher homotopy groups of $S$ and $T$ are isomorphic. 
\end{cor}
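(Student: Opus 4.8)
The plan is to exhibit both projections $\pi_N|_V\colon V\to S$ and $\pi_{H\times H}|_U\colon U\to T$ as preimages under universal bundles $\pi_G\colon EG\to BG$, and then to transport the homotopy equivalence $U\simeq V$ of \cref{lm:quillen-a} across the resulting homotopy exact sequences. The general fact I would isolate first is this: for any group $G$ the canonical map $\pi_G\colon EG\to BG$ is a covering (a principal bundle with discrete structure group $G$) whose total space $EG$ is contractible, so it is the universal cover of $BG$ and its monodromy $\pi_1(BG)\xrightarrow{\ \cong\ }G$ is an isomorphism. Hence for any subcomplex $A\subseteq BG$ the restriction $\pi_G^{-1}(A)\to A$ is again a covering; based at the vertex over the basepoint of $A$ that corresponds to $e\in G$, it yields a natural isomorphism $\pi_1(\pi_G^{-1}(A))\cong\Ker\bigl(\pi_1(A)\to\pi_1(BG)=G\bigr)$, the map being induced by $A\hookrightarrow BG$, together with isomorphisms $\pi_k(\pi_G^{-1}(A))\cong\pi_k(A)$ for all $k\geq 2$.

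Next I would identify $V$ and $U$ as such preimages. The deck group $N$ acts on $EN$ by left multiplication $n\cdot(n_0,\dots,n_k)=(nn_0,\dots,nn_k)$; since $n\cdot E(N(x\to y))=E(N(x\to ny))$ the subcomplex $V$ is $N$-invariant, and since each nonempty $N(x\to y)$ is a left coset $n_0N_x$ of a stabiliser one gets $\pi_N(E(N(x\to y)))=BN_x$, whence $\pi_N(V)=\bigcup_x BN_x=S$. Thus $V=\pi_N^{-1}(S)$, and the general fact gives $\pi_1(V)\cong\Ker(\pi_1(S)\to N)$ and $\pi_k(V)\cong\pi_k(S)$ for $k\geq 2$. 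On the other side there is one simplification: $T$ is \emph{defined} as $\pi_{H\times H}(U)$, so it is enough to know that $U$ is invariant under the left translation action of $H\times H$ on $E(H\times H)$; granting this, $U=\pi_{H\times H}^{-1}(T)$ automatically, and the general fact gives $\pi_1(U)\cong\Ker(\pi_1(T)\to H\times H)$ and $\pi_k(U)\cong\pi_k(T)$ for $k\geq 2$.

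Finally I would glue in \cref{lm:quillen-a}. Taking the $0$-simplex $\left(\begin{smallmatrix}e\\ e\end{smallmatrix}\right)$ of $W$ as basepoint, the maps $f,g$ send it to $(e,e)\in U$ and $e\in V$, i.e.\ to the chosen basepoints lying over those of $T$ and $S$, so the equivalences $U\xleftarrow{\ f\ }W\xrightarrow{\ g\ }V$ induce $\pi_1(U)\cong\pi_1(V)$ and $\pi_k(U)\cong\pi_k(V)$ for $k\geq 2$. Composing the three isomorphisms gives
\[\Ker(\pi_1(S)\to N)\cong\pi_1(V)\cong\pi_1(U)\cong\Ker(\pi_1(T)\to H\times H),\]
which is the desired $\theta$, and $\pi_k(S)\cong\pi_k(V)\cong\pi_k(U)\cong\pi_k(T)$ for $k\geq 2$; naturality in $G$ follows because every construction used is natural.

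I expect the main obstacle to be the invariance of $U$ under $H\times H$. In contrast to the $N$-side it is not formal: left-translating a graph $\Gamma_{n,-}=\{(x,n\cdot x)\}$ by $(a,b)$ must again be a graph $\Gamma_{n',-}$, which forces the left translations of $H$ to be expressible through the action — that is, it uses the compatibility of the $N$-action with the group structure of $H$ (as holds, for instance, when $N$ acts through the translations and permutations of the relevant wreath product). Verifying this saturation, rather than the homotopy-theoretic bookkeeping, is the crux of the argument.
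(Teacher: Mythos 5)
Your argument is essentially the paper's own proof: the paper exhibits exactly your two squares---$V$ as the pullback of $EN \to BN$ along $S \hookrightarrow BN$, and $U$ as the pullback of $E(H\times H)\to B(H\times H)$ along $T \hookrightarrow B(H\times H)$---and obtains $\theta$ from the homotopy long exact sequences of the two vertical maps (coverings with discrete fibers $N$ and $H\times H$) combined with \cref{lm:quillen-a}; your covering-space bookkeeping and basepoint matching are the details it leaves implicit.

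Your closing remark about the saturation of $U$ is not only legitimate but sharper than the paper, which silently treats the right-hand square as a pullback. Saturation of $U$ under left translation by $H\times H$ is equivalent to the condition that every left translation of $H$ is realized by the action of some element of $N$: necessity follows by translating the diagonal $\Gamma_{1\cdot-}$ by $(a,b)$, which produces the graph of $x\mapsto ba^{-1}x$; sufficiency follows because $(a,b)\cdot\Gamma_{n\cdot-}$ is the graph of $x \mapsto b\cdot(n\cdot(a^{-1}x))$, a composite of three maps realized by $N$. Without this condition the corollary as literally stated is false. For example, let $N=\ZZ/2$ act on $H=\ZZ/3$ by inversion: then $U\simeq V\simeq \ast$ and $\Ker(\pi_1(S)\to N)=1$, while $T=\pi_{H\times H}(U)$ is the union of the classifying spaces of the diagonal and antidiagonal copies of $\ZZ/3$ inside $B(H\times H)$, meeting in the basepoint, so $\pi_1(T)\cong \ZZ/3 \ast \ZZ/3$ maps to $H\times H$ with a nontrivial (free) kernel. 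So the saturation you flag is a genuine hypothesis, not mere bookkeeping; it does hold in the only case used later, $N=G\wr S_n$ acting on $H=G^n$, since left translation by $a\in G^n$ is the action of $(a,1)\in N$, and with that verification your proof (and the paper's) is complete.
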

\begin{proof}
Consider the following two pull-back squares:
\[ \xymatrix{ V  \ar@{^{(}->}[r] \ar[d] \pullbackcorner & \ar[d]^{\pi_N} EN \\
              S \ar@{^{(}->}[r] & BN } \ \ \ 
   \xymatrix{ U  \ar@{^{(}->}[r] \ar[d] \pullbackcorner & \ar[d]^{\pi_{H \times H}} E(H \times H) \\
              T \ar@{^{(}->}[r] & B(H \times H)}\] 
The required isomorphism can be obtained from the homotopy long exact sequence applied to the left arrows of these diagrams.\end{proof}

Now let $G$ be a group. Set $N = G \wr S_n$, $H = G^n$ and consider the left action of $N$ on $H$ given by $(g, s) \cdot h = gh^{s^{-1}}$, $g, h\in G^n$, $s\in S_n$.
The simplicial subset $T \subset BG^{2n}$ from the above corollary is precisely the preimage of $0$ under
 the map $BG^{2n} \to \ZZ[BG]$ defined by \[(x_1, x_2, \ldots x_{2n}) \mapsto x_1 + \ldots + x_n - x_{n+1} - \ldots - x_{2n}.\] 
The latter map coincides with the alternating map $h_n$ from the introduction up to a permutation of factors of $BG^{2n}$,
 therefore we can identify $T$ with $h_n^{-1}(0)$.

It is also easy to compute the map $\pi_1(S) \to N$. Indeed, van Kampen theorem~\cite[Theorem~2.7]{May99} asserts that
$\pi_1(S)$ is isomorphic to the free product of stabilizer subgroups $N_{h} \leq N$ amalgamated over pairwise intersections $N_h \cap N_{h'}$, $h, h\in H$.
For $h \in G^n$ the subgroup $N_h$ consists of elements $(g, s) \in N$ satisfying $gh^{s^{-1}} = h$, i.e. elements of the form $(hh^{-s^{-1}}, s)$.
Thus, there is an isomorphism $N_h\cong S_n$, the fundamental group $\pi_1(S)$ is isomorphic to $S_n(G)$ and the map $\pi_1(S) \to N$ coincides with the map $\mu$ defined in section~\ref{sec:QnG-def}.
In particular, for an abelian $G$ its kernel is generated by symbols $c_{ij}(u, v)$.

We have shown that $\Ker(\mu)$ is isomorphic to $\Ker(\pi_1(h_n(0)) \to G^{2n})$.
This isomorphism is natural in $G$, since all the objects $N$, $H\times H$, $S$, $T$ used in its construction depended on $G$ functorially.
The following lemma gives even more concrete description of this isomorphism for abelian $G$.
\begin{lemma} \label{lem:concrete-formula}
 If $G$ is abelian then every generator $c_{ij}(u, v) \in \Ker(\pi_1(S)\to N)$ is mapped under $\theta$ to the class (in $\pi_1(T)$) of the loop $f_{ij}(u, v)$, 
  which is defined as follows:
 \[f_{ij}(u,v) = p_{ij}(u) \circ p_{ij}(v) \circ p_{ij}(u^{-1} v^{-1}), \text{ where }
   p_{ij}(x)=(x[i], x[i]) \circ (x^{-1}[i], x^{-1}[j]).\]
 Here we denote by $x \circ y$ the formal concatenation of two loops from $T_1$.
\end{lemma}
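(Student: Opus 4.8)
The plan is to trace the construction of $\theta$ from \cref{cor:ker-iso} on an explicit loop representing $c_{ij}(u,v)$. Recall that $\theta$ was built from the zig-zag $U\xleftarrow{f}W\xrightarrow{g}V$ of \cref{lm:quillen-a} together with the identifications $\pi_1(V)\cong\Ker(\pi_1(S)\to N)$ and $\pi_1(U)\cong\Ker(\pi_1(T)\to H\times H)$: the left vertical maps $V\to S$ and $U\to T$ in the two pullback squares are restrictions of the universal bundles $EN\to BN$ and $E(H\times H)\to B(H\times H)$, whose total spaces are contractible over aspherical bases, so $\pi_1(V)\to\pi_1(S)$ and $\pi_1(U)\to\pi_1(T)$ are injective with images exactly the two kernels. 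Thus a based loop of $S$ lies in $\Ker(\pi_1(S)\to N)$ iff it lifts to a based loop of $V$, and $\theta([\ell])$ is computed by: lifting $\ell$ to $\widetilde\ell$ in $V$; choosing $\gamma$ in $W$ with $g_*[\gamma]=[\widetilde\ell]$; and projecting $f(\gamma)$ into $T$. Concretely $\theta([\ell])=[\pi_{H\times H}(f(\gamma))]\in\pi_1(T)$.

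For the first two steps, use $h_{ij}(a)=(ij)_a\,(ij)_1$ from \eqref{eq:h-def}: under $\pi_1(S)\cong S_n(G)$ the symbol $c_{ij}(u,v)=h_{ij}(u)h_{ij}(v)h_{ij}(vu)^{-1}$ becomes the word $(ij)_u(ij)_1(ij)_v(ij)_{vu}$, each generator $(ij)_a$ being the single $1$-simplex $(d_{ij}(a),(ij))\in N=BN_1$ (here $d_{ij}(a)$ has $a$ in position $i$ and $a^{-1}$ in position $j$). Lifting along $EN\to BN$ from $1\in EN_0=N$ by successive multiplication gives the vertices $1$, $(d_{ij}(u),(ij))$, $(d_{ij}(u),1)$, $(d_{ij}(uv),(ij))$, $1$. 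The lift closes because $d_{ij}$ is a homomorphism for abelian $G$ and $d_{ij}(a)^{(ij)}=d_{ij}(a)^{-1}$, which is exactly $\mu(c_{ij}(u,v))=1$; one checks each edge fixes a point of $H=G^n$, so $\widetilde\ell$ lies in $V$.

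The third step is the heart of the matter. I would produce $\gamma$ by assigning top-row entries $x_k\in H$ to the vertices of $\widetilde\ell$ so that each consecutive column pair $\left(\begin{smallmatrix}x_{k-1}&x_k\\ n_{k-1}&n_k\end{smallmatrix}\right)$ is a genuine $1$-simplex of $W$; the defining condition of $W$ forces $x_{k-1}$ and $x_k$ to be common fixed points of $n_{k-1}$ and $n_k$. A naive based choice does not close up, since the available fixed points drift around the loop, so one corrects it using the explicit contracting homotopy of \cref{lm:quillen-a} (a homotopy inverse of $g$) that resets to the chosen point $\widetilde x$; this changes the edge count from four to six and introduces edges whose permutation parts are $1$ and $(ij)$. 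I expect this threading of fixed points through $W$, and the verification that $g(\gamma)$ is homotopic to $\widetilde\ell$ in $V$ (using the $2$-simplices of $V$, i.e. triples of group elements with a common source point), to be the main obstacle; the remaining steps are direct.

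Finally, $f$ sends $\left(\begin{smallmatrix}x_{k-1}&x_k\\ n_{k-1}&n_k\end{smallmatrix}\right)$ to the $U$-edge $\bigl((x_{k-1},n_{k-1}x_{k-1}),(x_k,n_kx_k)\bigr)$, and $\pi_{H\times H}$ projects this to the $T$-edge $\bigl(x_{k-1}^{-1}x_k,\,(x_{k-1}^{-1}x_k)^{s^{-1}}\bigr)$, where $s$ is the permutation part of $n_{k-1}$ (the $W$-condition $n_{k-1}x_k=n_kx_k$ makes the second coordinate independent of $n_k$). An edge with $s=1$ and $x_{k-1}^{-1}x_k=x[i]$ is exactly $(x[i],x[i])$, and one with $s=(ij)$ and $x_{k-1}^{-1}x_k=x^{-1}[i]$ is exactly $(x^{-1}[i],x^{-1}[j])$; these are the two factors of $p_{ij}(x)$. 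Matching the six edges of $f(\gamma)$ against $p_{ij}(u)$, $p_{ij}(v)$, $p_{ij}(u^{-1}v^{-1})$ and simplifying with $G$ abelian identifies $[\pi_{H\times H}(f(\gamma))]$ with $[f_{ij}(u,v)]$, which by the first paragraph is $\theta([c_{ij}(u,v)])$.
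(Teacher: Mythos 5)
Your framework and your endpoint bookkeeping are correct and agree with the paper: $\theta$ is indeed computed through the zig-zag $V \xleftarrow{\ g\ } W \xrightarrow{\ f\ } U$ of \cref{lm:quillen-a}, with $\pi_1(V)\to\pi_1(S)$ and $\pi_1(U)\to\pi_1(T)$ injective onto the two kernels; the identification $c_{ij}(u,v)=(ij)_u(ij)_1(ij)_v(ij)_{vu}$, the edge $(d_{ij}(a),(ij))$ representing $(ij)_a$, the closing of the lift in $EN$ for abelian $G$, and your dictionary translating $W$-edges into $T$-edges are all right. But the proof has a hole exactly where its substance should be: you never exhibit the loop $\gamma$ in $W$ with $g_*[\gamma]=[\widetilde\ell]$. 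You describe a strategy (``threading of fixed points'', correcting a non-closing naive lift with the contracting homotopy) and yourself flag it as ``the main obstacle'' that you ``expect'' to be surmountable. That construction \emph{is} the lemma; what you have written is a plan for a proof, not a proof.

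The missing idea, which is how the paper's proof runs, is to work one generator $h_{ij}(x)$ at a time with \emph{open} paths in $W$ carrying a free base vector $g\in G^n$ in the bottom row, rather than trying to lift the whole loop at once and repair closure. Concretely, the paper takes the length-$4$ path $\gamma_{ij}(g,x)$ through the vertices
\[
\Big(\begin{smallmatrix} e \\ (g,\, e) \end{smallmatrix}\Big),\
\Big(\begin{smallmatrix} x[i] \\ (g,\, e) \end{smallmatrix}\Big),\
\Big(\begin{smallmatrix} x[i] \\ (d_{ij}(x)g,\, (ij)) \end{smallmatrix}\Big),\
\Big(\begin{smallmatrix} e \\ (d_{ij}(x)g,\, (ij)) \end{smallmatrix}\Big),\
\Big(\begin{smallmatrix} e \\ (d_{ij}(x)g,\, e) \end{smallmatrix}\Big):
\]
the top row climbs to $x[i]$, a common agreement point of $(g,e)$ and $(d_{ij}(x)g,(ij))$, the bottom row switches while the top row sits there, and then both return. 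Under $\pi_N\circ g$ two edges degenerate and the rest spell the edge-word $(ij)_x\,(ij)_1=h_{ij}(x)$ in $S$; under $\pi_{H\times H}\circ f$ the other two edges degenerate and (by exactly your paragraph-four dictionary) the rest spell $p_{ij}(x)$ in $T$. The drift you worried about is now visible and harmless: the endpoint is the starting point translated by $d_{ij}(x)$ in the bottom row, so the concatenation $\gamma_{ij}(g,u)\ast\gamma_{ij}(d_{ij}(u)g,v)\ast\gamma_{ij}(d_{ij}(uv)g,u^{-1}v^{-1})$ is a genuine loop in $W$ precisely because $d_{ij}(u)\,d_{ij}(v)\,d_{ij}(u^{-1}v^{-1})=e$ when $G$ is abelian, and its two projections are $c_{ij}(u,v)=h_{ij}(u)h_{ij}(v)h_{ij}(u^{-1}v^{-1})$ (using \eqref{H4}) and $f_{ij}(u,v)$ on the nose. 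Finally, the verification you expected to be hard --- that $g(\gamma)$ is homotopic to $\widetilde\ell$ inside $V$ --- requires no work at all: taking $g=e$ both are based at $1\in EN_0$, their images in $\pi_1(S)$ are equal (both represent $c_{ij}(u,v)$), and injectivity of $\pi_1(V)\to\pi_1(S)$ then forces $[g(\gamma)]=[\widetilde\ell]$ in $\pi_1(V)$.
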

\begin{proof}
Fix a vector $g\in G^n$ and denote by $\gamma_{ij}(g, x)$ the path of length $4$ in $W$ (the simplicial set defined in the proof of~\cref{lm:quillen-a}) which connects 
the following five points of $W_0$:
\[
\Big(\begin{smallmatrix} e    \\ (g; e) \end{smallmatrix}\Big),\,
\Big(\begin{smallmatrix} x[i] \\ (g; e) \end{smallmatrix}\Big),\,
\Big(\begin{smallmatrix} x[i] \\ (d_{ij}(x) g, (ij)) \end{smallmatrix}\Big),\,
\Big(\begin{smallmatrix} e    \\ (d_{ij}(x) g, (ij)) \end{smallmatrix}\Big),\,
\Big(\begin{smallmatrix} e    \\ (d_{ij}(x) g, e) \end{smallmatrix}\Big).\]
One can easily check that the image of $\gamma_{ij}(g, x)$ in $S$ under $\pi_N g$ is precisely the element $h_{ij}(x)$, while
$\pi_{H\times H} f$ sends $\gamma_{ij}(g, x)$ to the loop $p_{ij}(x)$ ($f, g$ are also defined in the proof of~\cref{lm:quillen-a}).
\end{proof}

\begin{lemma} \label{lm:endotr} The only natural endotransformations of the functor $\HH_2(-, \ZZ)\colon \catname{Groups}\to \catname{Ab}$ 
 are morphisms of multiplication by $n \in \ZZ$.
\end{lemma}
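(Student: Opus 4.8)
The plan is to show that any natural endotransformation $\eta=\{\eta_G\}$ is pinned down by a single integer, by testing it on the universal example $G=\ZZ^2$ and then propagating the answer to all groups via naturality, using the fact that $\HH_2(-,\ZZ)$ is generated by commuting pairs.

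First I would record that $\HH_2(\ZZ^2,\ZZ)\cong\ZZ$, generated by the fundamental class $\omega$ of the torus $B\ZZ^2$ (equivalently $\Lambda^2\ZZ^2$). Since every endomorphism of the abelian group $\ZZ$ is multiplication by an integer, the component $\eta_{\ZZ^2}$ must satisfy $\eta_{\ZZ^2}(\omega)=n\,\omega$ for a unique $n\in\ZZ$; this is the integer the lemma produces.

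The heart of the argument is the following generation statement: for every group $G$ the group $\HH_2(G,\ZZ)$ is generated by the elements $\phi_*(\omega)$ with $\phi$ ranging over all homomorphisms $\ZZ^2\to G$, i.e.\ over all commuting pairs of elements of $G$. Granting it, naturality closes the proof immediately: for any such $\phi$ one has
\[ \eta_G(\phi_*\omega)=\phi_*\bigl(\eta_{\ZZ^2}\omega\bigr)=\phi_*(n\,\omega)=n\,\phi_*\omega, \]
so the homomorphism $\eta_G$ coincides with multiplication by $n$ on a generating set and therefore equals $n\cdot\mathrm{id}$ for every $G$. Conversely, multiplication by a fixed $n\in\ZZ$ is visibly a natural endotransformation, and distinct integers already act differently on $\HH_2(\ZZ^2,\ZZ)=\ZZ$; hence the natural endotransformations are exactly the maps $n\cdot\mathrm{id}$, $n\in\ZZ$.

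I expect the generation statement to be the main obstacle. I would deduce it from the classical description of the Schur multiplier in terms of \emph{relations among commutators} (C.~Miller), which exhibits $\HH_2(G,\ZZ)$ as generated by the symbols attached to commuting pairs. Alternatively, and closer to the machinery of this paper, it can be extracted from the exact sequence $1\to\HH_2(G,\ZZ)\to G\wedge G\to[G,G]\to 1$ used above: letting $N\leq G\wedge G$ be the subgroup generated by the symbols $g\wedge h$ with $[g,h]=1$, one has $N\subseteq\HH_2(G,\ZZ)$, and it remains to verify that the induced map $(G\wedge G)/N\to[G,G]$ is injective, equivalently that every element of $\ker\bigl(G\wedge G\to[G,G]\bigr)$ already lies in $N$. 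This last step is a piece of commutator calculus, its consistency being guaranteed by the Hall--Witt identities; it is the only genuinely technical point, and once it is settled the generation statement, and hence the lemma, follows.
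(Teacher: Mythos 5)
Your overall strategy (pin down an integer on a universal example, then propagate by naturality) is sound, but the generation statement it rests on is false: $\HH_2(G,\ZZ)$ is \emph{not} in general generated by the toral classes $\phi_*(\omega)$, $\phi\colon\ZZ^2\to G$. Take $G=\Gamma_2$, the genus-two surface group $\langle x_1,y_1,x_2,y_2\mid [x_1,y_1][x_2,y_2]\rangle$ (ironically, one of the universal examples in the paper's own proof). In $\Gamma_2$ the centralizer of every nontrivial element is infinite cyclic, so every abelian subgroup is cyclic; hence every homomorphism $\phi\colon\ZZ^2\to\Gamma_2$ factors through a trivial or infinite cyclic group, and since $\HH_2(\ZZ,\ZZ)=0$ we get $\phi_*(\omega)=0$ for every $\phi$. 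Nevertheless $\HH_2(\Gamma_2,\ZZ)\cong\ZZ$. The same example kills both of your proposed justifications: in the sequence $1\to\HH_2(G,\ZZ)\to G\wedge G\to[G,G]\to 1$ the subgroup $N$ generated by the wedges $g\wedge h$ with $[g,h]=1$ is trivial for $G=\Gamma_2$ while the kernel is $\ZZ$, so the injectivity of $(G\wedge G)/N\to[G,G]$ that you hoped to extract from commutator calculus simply fails; and Miller's theorem does not say what you want it to say --- it identifies $\HH_2(G,\ZZ)$ with relations among commutators, i.e. with products $\prod_i g_i\wedge h_i$ satisfying $\prod_i[g_i,h_i]=1$, and such a product need not be expressible through commuting pairs.

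The correct classical input --- and the one the paper uses --- is that every class of $\HH_2(G,\ZZ)$ is the image of the fundamental class under some homomorphism $\chi\colon\Gamma_k\to G$ from a surface group of \emph{arbitrary} genus; genus one does not suffice. Your argument can be repaired with this input and then becomes more elementary than the paper's: since $\HH_2(\Gamma_k,\ZZ)\cong\ZZ$, the map $\eta_{\Gamma_k}$ is multiplication by some $n_k$; the handle-collapsing homomorphism $p\colon\Gamma_k\to\ZZ^2$ (sending $x_1,y_1$ to a basis and $x_i,y_i$ to $0$ for $i\geq 2$) carries the fundamental class of $\Gamma_k$ to $\omega$, so naturality applied to $p$ forces $n_k=n$; and then for $x=\chi_*[\Gamma_k]$ naturality applied to $\chi$ gives $\eta_G(x)=nx$. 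The paper instead determines $\eta$ on all finitely generated free abelian groups at once, via the classification of quadratic functors ($\wedge^2$ corresponds to the quadratic $\ZZ$-module $0\to\ZZ\to 0$), and transfers this to $\Gamma_k$ through the injection $\HH_2(\Gamma_k,\ZZ)\hookrightarrow\wedge^2\ZZ^{2k}$ induced by abelianization. Either way, the step you deferred as "the only genuinely technical point" is exactly where your proof, as written, breaks down.
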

\begin{proof}
 Denote by $\eta$ an endotransformation $\HH_2(-, \ZZ) \to \HH_2(-, \ZZ)$.
 When restricted to the subcategory of free finitely-generated abelian groups $\catname{Add}(\ZZ) \subseteq \catname{Ab}$ the second homology functor
 coincides with the second exterior power functor $A \mapsto \wedge^2A$. 
 
 Recall from~\cite[Theorem~6.13.12]{Ba96} that the category of quadratic functors is equivalent to the category of quadratic $\ZZ$-modules (see Definition~6.13.5 ibid.)
 The functor $A \mapsto \wedge^2A$ is clearly quadratic and corresponds to the quadratic $\ZZ$-module $0 \to \ZZ \to 0$ under this equivalence.
 Thus, we get that $\eta$ restricted to $\catname{Add}(\ZZ)$ coincides with the the morphism of multiplication by $n\in \ZZ$.
 
 Consider the group $\Gamma_k = \langle x_1, y_1, \ldots x_k, y_k \mid [x_1, y_1]\cdot \ldots \cdot [x_k, y_k] \rangle$ (the fundamental group of a sphere with $k$ handles).
 It is clear that the abelianization map $\Gamma_k \to \ZZ^{2k}$ induces an injection $\HH_2(\Gamma_k, \ZZ) \cong \ZZ \to \wedge^2\ZZ^{2k}$.  
 Consider the following diagrams.
  \[ \xymatrix{ \ZZ \ar@{^{(}->}[r] \ar[d]_{\eta_{\Gamma_k}} & \ar[d]^{n \cdot} \wedge^2\ZZ^{2k} \\
                \ZZ \ar@{^{(}->}[r]                          & \wedge^2\ZZ^{2k} } \ \ \ 
     \xymatrix{   \ZZ  \ar[r]^(.35){\chi} \ar[d]_{\eta_{\Gamma_k}} & \ar[d]^{\eta_G} \HH_2(G, \ZZ) \\
                  \ZZ  \ar[r]^(.35){\chi}                          & \HH_2(G, \ZZ)}  \]
 From the left diagram it follows that $\eta_{\Gamma_k}$ is also the morphism of multiplication by $n$.
 For every element $x\in \HH_2(G, \ZZ)$ there exist an integer $k$ and a map $\chi\colon \Gamma_k\to G$ sending the generator of $\HH_2(\Gamma_k, \ZZ)$ to $x$.
 From the right square we conclude that $\eta_G(x) = nx$, as claimed.
\end{proof}

\begin{proof}[Proof of~\cref{thm:main}]
Factor $h_n$ as the composition of a trivial cofibration and a fibration \[BG^{2n}\to E_{h_n} \to \ZZ[BG].\]
Now write down the starting portion of the long homotopy exact sequence for the latter.
If we denote by $K$ the kernel of the map $\nu\colon \pi_1(T) \to G^{2n}$ induced by the embedding $h_n^{-1}(0)=T \subseteq BG^{2n}$ we obtain the following commutative diagram:
\[ \xymatrix{1 \ar[r] & K       \ar[d]_{\psi} \ar[r]_{\beta} & \pi_1(T) \ar[d]_{\varphi} \ar[r]_{\nu}     & G^{2n} \ar[r] \ar[d]_{\cong} & \HH_1(G, \ZZ) \ar[r] \ar@{=}[d] & 1 \\
             1 \ar[r] & \HH_2(G, \ZZ) \ar[r]_{\alpha} & \pi_1(F_{h_n})           \ar[r]  & \pi_1(E_{h_n}) \ar[r]                       & \HH_1(G, \ZZ) \ar[r] & 1.}\]
We already know by Corollaries~\ref{cor:main} and~\ref{cor:ker-iso} that $K$ is naturally isomorphic to $\HH_2(G, \ZZ)$ provided $n\geq 3$.
Thus, it suffices to show that $\psi$ is an isomorphism. In view of~\cref{lm:endotr} we only need to consider the case when $G$ is abelian.

Now we compute the image of each generator $c_{ij}(x, y) = x \wedge y \in \HH_2(G, \ZZ)$ under $\psi$.
In order to do this we need a more explicit description of the set of 1-simplices of $F_{h_n}$.
Since both $BG^{2n}$ and $\ZZ[BG]$ are fibrant $E_{h_n}$ and $F_{h_n}$ can be constructed using the path space construction, i.\,e. $E_{h_n} = BG^{2n} \times_{\ZZ[BG]} \ZZ[BG]^I$.
In this case the set of $1$-simplices of $F_{h_n}$ can be identified with the set of triples $(g, t, t') \in G^{2n}\times \ZZ[BG]_2 \times \ZZ[BG]_2$ 
 satisfying the following identites:
\[d_2(t) = h_n(g),\ \ d_1(t) = d_1(t'),\ \ d_0(t') = 0.\]
It will be convenient for us to perform all calculations inside the subset $F'\subset (F_{h_n})_1$ consisting of those triples for which $t'=0$ and $d_1(t)=d_0(t)=0$.
For shortness we write down an element of $F'$ as $(g, t)$.
It is easy to check that the image of the loop $p_{ij}(x)$ under the natural map $T \to F_{h_n}$ corresponds to the pair
$((e, d_{ij}(x)), t(x)),$ where $t(x)=-(x, x^{-1}) + (e, e) - (x^{-1}, e) + (e, x^{-1})$.

We want to devise a concrete formula for the element of $F'$ homotopic to the concatenation of two elements $(g_1, t_1), (g_2, t_2) \in F'$.
It is easy to check that $(g_1g_2, t)$, where $t$ is the $1$-st face of the filler $f$ for the $3$-horn $(t_2, ,t_1, h_n(g_1, g_2))$, is the desired element.
By Moore's theorem one can find a filler for any horn $(p_0, ,p_2, p_3)$ in a simplicial abelian group via the formula
 $f = s_0 p_0 - s_0 s_1 d_2 p_0 - s_0 s_0 d_1 p_0 + s_0 s_0 d_2 p_0 + s_1 p_2 + s_2 p_3 - s_1 p_3$. 
Substituting concrete values of $p_i$ into this formula and using the fact that $d_1t_2=0$ we get the following expression for $t$:
\begin{multline} \nonumber t = d_1f = p_0 - s_1 d_2 p_0 - s_0 d_1 p_0 + s_0 d_2 p_0 + p_2 - p_3 + s_1 d_1 p_3 = \\
             = t_1 + t_2 - h_n(g_1, g_2) - s_1 h_n(g_2) + s_0 h_n(g_2) + s_1 h_n (g_1g_2). \end{multline}
Applying the above formula twice we get that the image of $c_{ij}(u,v)$ under $\varphi\beta$ equals $((e, e), t)$, where
\begin{multline} \label{eq:formula-t} t = 2(e, e)-(x^{-1}, e) +(xy, x^{-1}y^{-1}) -(xy, e) -(x, x^{-1}) -(y, y^{-1}) -\\  -(e, x^{-1}y^{-1}) +(e, x^{-1}) +(x, y) +(x^{-1}, y^{-1}) -(e, y) -(xyx^{-1}y^{-1}, e) +(y, e).\end{multline}

The map $\alpha$ can be described as the Dold---Kan isomorphism $\HH_2(G, \ZZ) \cong \pi_2(\ZZ[BG])$ followed by the boundary map $\partial\colon \pi_2(\ZZ[BG]) \to \pi_1(F_{h_n})$.
It is not hard to show that $\partial$ maps each $c \in \pi_2(\ZZ[BG])$ to $((e,e),-c)$ (see~\cite[p.~29]{GJar09}).
Thus, we get that $\psi$ maps $x \wedge y$ to $-t$, where $t$ is given by~\eqref{eq:formula-t}.

On the other hand, the generator $x \wedge y$ of $\HH_2(G, \ZZ)$ corresponds to the 2-cycle $c = (x, y) - (y, x)$ (cf.~\cite[(14), p.~582]{Mi52}) and the corresponding
element of $\pi_2(\ZZ[BG])$ is simply the normalized 2-cycle of $c$:
\begin{equation} \nonumber c' = c - s_0d_0c - s_1d_1c + s_1d_0 = (x, y) - (y, x) - (x, e) +(y, e) + (e, x) - (e, y). \end{equation}
It it not hard to check that $t-c'$ is the image of the following $3$-chain under the differential $d_0-d_1+d_2-d_3$:
\[(xy, x^{-1}, y^{-1}) - (y, x, x^{-1}) + s_1 s_1(x^{-1} + xy - x - y) + s_0 s_0(x^{-1} - x - x^{-1}y^{-1}).\]
We obtain that the classes of $t$ and $c'$ in $\HH_2(G, \ZZ)$ are the same, hence $\psi$ is an isomorphism, as claimed.
\end{proof}

\printbibliography

\end{document}